\documentclass[pdflatex,sn-mathphys-num]{sn-jnl}
\usepackage{enumerate}
\usepackage{graphicx}%
\usepackage{multirow}%
\usepackage{amsmath,amssymb,amsfonts}%
\usepackage{amsthm}%
\usepackage{mathrsfs}%
\usepackage[title]{appendix}%
\usepackage{xcolor}%
\usepackage{textcomp}%
\usepackage{manyfoot}%
\usepackage{booktabs}%
\usepackage{algorithm}%
\usepackage{algorithmicx}%
\usepackage{algpseudocode}%
\usepackage{listings}%
\usepackage{stmaryrd}
\usepackage{array}
\usepackage{float}
\usepackage{placeins}

\theoremstyle{thmstyleone}%
\newtheorem{theorem}{Theorem}
\newtheorem{proposition}[theorem]{Proposition}%
  \newtheorem{corollary}{Corollary}
  \newtheorem{lemma}{Lemma}
\theoremstyle{thmstyletwo}%
\newtheorem{example}{Example}%
\newtheorem{remark}{Remark}%

\theoremstyle{thmstylethree}%
\newtheorem{definition}{Definition}%

\newcommand{\m}{\mathfrak{m}}
\newcommand{\R}{\mathbb{R}}
\newcommand{\C}{\mathbb{C}}
\newcommand{\Q}{\mathbb{Q}}
\newcommand{\Z}{\mathbb{Z}}
\newcommand{\N}{\mathbb{N}}
\newcommand{\Na}{\mathbb{N}_0[\alpha]}

\begin{document}

\title[Finite Generation in Polynomial Semirings]{Finite Generation in Polynomial Semirings}

\author[1]{\fnm{Mohammad} \sur{El-Asal}}\email{12531180@students.liu.edu.lb}

\author[2]{\fnm{Wael} \sur{Mahboub}}\email{wael.mahboub@lau.edu.lb}

\affil[1]{\orgdiv{Physics and Mathematics}, \orgname{Lebanese International University}, \orgaddress{\city{Beirut}, \country{Lebanon}}}

\affil[2]{\orgdiv{Mathematics}, \orgname{Lebanese University}, \orgaddress{\street{Fanar}, \city{Beirut}, \country{Lebanon}}}

\abstract{We study the semiring
$\mathbb{N}_0[\alpha]$ as an additive monoid where $\alpha$ is a positive real algebraic number. In the atomic case, the atoms of
$\mathbb{N}_0[\alpha]$ are precisely the powers $\alpha^n$ up to a  certain nonnegative integer $n$, and finite generation is governed
by divisibility of the minimal polynomial by a negative-tail polynomial. Our first main result gives a complete
characterization when the minimal polynomial has the form $\mathfrak{m}_\alpha(X)=p_\alpha(X)-c$ with
$c\in\mathbb{N}$.  Our second main result shows that finite generation forces $\alpha$ to be a weak Perron number, and proves a converse under the additional assumptions that $\alpha$ is an algebraic integer and the unique positive conjugate of its minimal polynomial. As an application, we analyze cubic minimal
polynomials and obtain a partial classification of rank-$3$ monoids $\mathbb{N}_0[\alpha]$ by generation and
factorization type, including coefficient constraints, non--length-factoriality results for a large family, and
examples with prescribed numbers of atoms.}

\keywords{monoids, factorization, semiring}

\maketitle

\section{Introduction}\label{sec1}

The study of \emph{semirings} is usually traced to
Vandiver's 1934 paper, where the algebraic consequences of dropping additive cancellation (and,
more generally, additive inverses) were investigated \cite{Vandiver1934}.
Their ideal theory was developed early on by Bourne, who introduced and studied analogues of radicals
in semirings \cite{Bourne1951}. By now, semiring theory has matured into a broad subject with
several standard references and applications, including the monographs of Golan and of
Hebisch-Weinert \cite{Golan1999,HebischWeinert1998}.

We study the polynomial semiring $\mathbb{N}_0[X]$. Fixing $\alpha\in\mathbb{R}_{>0}$, evaluation at
$\alpha$ defines an additive submonoid
\[
\mathbb{N}_0[\alpha]=\{f(\alpha): f(X)\in \mathbb{N}_0[X]\}\subseteq \mathbb{R}_{\ge 0}.
\]
Recent work of Correa-Morris and Gotti shows that such monoids arise naturally in the study of \emph{algebraic valuations of polynomial semirings} and develops a detailed understanding of their atomic structure and factorization phenomena \cite{CMG}. Recent progress can be found in \cite{ajran2023factorization,chapman2026betti,jiang2023primality,gotti2023subatomicity}. This places $\mathbb{N}_0[\alpha]$ within the general framework of factorization theory in commutative cancellative monoids, a subject whose modern development grew out of the failure of unique factorization in algebraic number theory and is now treated in the monograph of Geroldinger and Halter-Koch. \cite{GHK2006}.

In \cite{dani2025set}, the authors study the admissibility of pairs $(\left|\mathcal{A}(\Na)\right|,\left|\mathcal{S}(\Na)\right|)$ where the latter denotes the number of strong atoms.
In the same paper, the authors give a detailed classification of rank $2$ semirings of the form $\Na$. We note that Theorem 5.4 of \cite{dani2025set} yields a partial answer to the conjecture stated in \cite{CMG} (Conjecture 5.14): For every $n\in\N$ there is an algebraic number $\alpha\in\R_{>0}$ such that $\left|\mathcal{A}(\Na)\right|-
\deg \m_\alpha(X) = n$, for even integers $n$.

In this paper, we aim to study the cardinality of the set of atoms $\left|\mathcal{A}(\Na)\right|$. Our first main result is a complete characterization of polynomials of the form $\m_\alpha(X)=p(X)-c$, where $p(X)\in \N_0[X]$ (see Theorem~\ref{pminusc}). Our next main result links the weak Perron numbers to infinitely generated monoids; this is Theorem~\ref{perron}.

Next, we apply our results to obtain a partial classification of rank-$3$ monoids of the form $\Na$ according to their generation pattern; this is done in Section~\ref{deg3}. Most cases follow directly from the theorems in Section~\ref{general}. For the form $\m_\alpha(X) = X^3 + aX^2 - bX - c$, we give necessary conditions for finite generation of $\Na$ and provide a family of monoids showing that these conditions are not sufficient. We also prove that it is never an $LFM$, and we give a family of monoids with $\left|\mathcal{A}(\Na)\right| = 5$.
For the form $\m_\alpha(X) = X^3 - aX^2 + bX - c$, we provide a necessary and sufficient condition for infinite generation and necessary and sufficient conditions for $\Na$ to be a proper $LFM$. 

\section{Notation}

\begin{itemize}
\item We write $\N_0:=\N\cup\{0\}$.
\item For integers $m\le n$ we use $\llbracket m,n\rrbracket := \{m,m+1,\dots,n\}$.
\item If $(M,+,0)$ is a commutative monoid and $S\subseteq M$, then
\[
\langle S\rangle \ :=\ \left\{\sum_{i=1}^k s_i \ :\ k\in\N,\ s_i\in S\right\}\cup\{0\}
\]
denotes the submonoid generated by $S$.
\end{itemize}

\begin{definition}
A \emph{commutative semiring} is a set $R$ with operations $(+, \cdot)$ such that
$(R,+,0)$ is a commutative monoid, $(R,\cdot,1)$ is a commutative monoid,
multiplication distributes over addition, and $0\cdot r=r\cdot 0=0$ for all $r\in R$.\\

The semiring $\N_0[X]$ is the set of polynomials with coefficients in $\N_0$ with the usual
addition and multiplication of polynomials.
\end{definition}

\begin{definition}
Fix $\alpha\in\R_{>0}$. The evaluation map
\[
\operatorname{ev}_\alpha:\N_0[X]\to \R_{\ge 0},\qquad f(X)\mapsto f(\alpha),
\]
has image
\[
\N_0[\alpha]\ :=\ \operatorname{ev}_\alpha(\N_0[X]) \ \subseteq\ (\R_{\ge0},+),
\]
which we view as a commutative cancellative monoid under addition.
\end{definition}

\begin{definition}
Let $m_\alpha(X)\in\Q[X]$ be the \emph{minimal polynomial} of $\alpha$ over $\Q$.
Let $\m_\alpha(X)\in\Z[X]$ be the unique \emph{primitive} integer polynomial with positive leading
coefficient whose image in $\Q[X]$ is $m_\alpha(X)$ (so $\m_\alpha(\alpha)=0$ and $\gcd$ of its coefficients is $1$).

For any $f(X)=\sum_i a_iX^i\in\Z[X]$, define its positive and negative parts
\[
f^+(X):=\sum_{a_i>0} a_iX^i\in\N_0[X],\qquad
f^-(X):=\sum_{a_i<0} (-a_i)X^i\in\N_0[X],
\]
so that $f=f^+-f^-$ and $\operatorname{supp}(f^+)\cap\operatorname{supp}(f^-)=\varnothing$.
We call $(f^+,f^-)$ the \emph{(canonical) minimal pair} of $f$.
In particular, we write $\m_\alpha = p_\alpha - q_\alpha$ with $(p_\alpha,q_\alpha)=(\m_\alpha^+,\m_\alpha^-)$.
\end{definition}

\begin{definition}
Let $(M,+,0)$ be a commutative cancellative monoid.
An element $u\in M$ is a \emph{unit} if it has an additive inverse in $M$.
A nonunit $a\in M\setminus\{0\}$ is an \emph{atom} (or \emph{irreducible}) if
$a=b+c$ with $b,c\in M$ implies that $b$ or $c$ is a unit.
We write $\mathcal A(M)$ for the set of atoms of $M$.
The monoid $M$ is \emph{atomic} if every nonzero nonunit is a finite sum of atoms,
and \emph{antimatter} if $\mathcal A(M)=\varnothing$.
\end{definition}

\begin{definition}
Assume $M$ is atomic. A \emph{factorization} of $x\in M\setminus\{0\}$ is an expression
$x=a_1+\cdots+a_k$ with $k\in\N$ and $a_i\in\mathcal A(M)$.
Two factorizations are identified if they differ only by permuting the atoms.
Let $\mathsf Z(x)$ be the set of factorizations of $x$, and for $z\in\mathsf Z(x)$ let $|z|:=k$
be its \emph{length}. The \emph{set of lengths} is $\mathsf L(x):=\{|z|:z\in\mathsf Z(x)\}$.

\begin{itemize}
\item $M$ is a \emph{UFM} (unique factorization monoid) if $|\mathsf Z(x)|=1$ for every $x\ne 0$.
\item $M$ is an \emph{HFM} (half-factorial monoid) if $|\mathsf L(x)|=1$ for every $x\ne 0$.
\item $M$ is an \emph{LFM} (length-factorial monoid) if the map $\mathsf Z(x)\to\N$, $z\mapsto |z|$ is injective
for every $x\ne 0$.
We say $M$ is a \emph{proper LFM} if it is an LFM but not a UFM.
\item $M$ is an \emph{FFM} (finite factorization monoid) if $\mathsf Z(x)$ is finite for every $x\ne 0$.
\end{itemize}
\end{definition}

\begin{definition}
A commutative monoid $M$ is \emph{finitely generated} (FGM) if $M=\langle S\rangle$ for some finite $S\subseteq M$.
\end{definition}

\begin{definition} \cite{Lind1984}
Let $\alpha$ be a real algebraic number and let $\mathrm{Conj}(\alpha)$ be the set of Galois conjugates of $\alpha$ in $\C$.
We say $\alpha$ is a \emph{weak Perron number} if $\alpha>0$ and $|\beta|\le \alpha$ for every $\beta\in\mathrm{Conj}(\alpha)$. We say $\alpha$ is a \emph{Perron number} if $\alpha>0$ and $|\beta|< \alpha$ for every $\beta\in\mathrm{Conj}(\alpha)$ such that $\beta \neq \alpha$.

\end{definition}

\begin{definition}
A monic polynomial $H(X)\in \mathbb Z[X]$ is called a \emph{negative-tail polynomial} if
\[
H(X)=X^n-\sum_{i=0}^{n-1}a_iX^i
\]
for some $n\in\mathbb N$ and coefficients $a_0,\ldots,a_{n-1}\in\mathbb N_0$.
\end{definition}

Throughout this paper, we consider only commutative cancellative monoids. 

\section{Preliminaries}

Let $\alpha$ be a positive real algebraic number over $\Q$. Let $\m_{\alpha}$ be the primitive polynomial of minimal degree over $\Z$ with $\m_{\alpha}(\alpha)=0$. Let $p_{\alpha}(X),\ q_\alpha(X)\in\N_0[X]$ such that $\m_{\alpha}(X)=p_{\alpha}(X)-q_{\alpha}(X)$.\\

\begin{proposition}[\cite{CMG}, Theorem 4.2]\label{generation}
For each algebraic $\alpha \in \mathbb{R}_{>0}$, the monoid $\mathbb{N}_0[\alpha]$ is atomic if and only if $1 \in \mathcal{A}(\mathbb{N}_0[\alpha])$, and $\mathbb{N}_0[\alpha]$ is antimatter otherwise. 

Also, if $\mathbb{N}_0[\alpha]$ is atomic, then there exists $\sigma \in \mathbb{N} \cup \{\infty\}$ such that
\begin{equation}\label{eq:atoms}
\mathcal{A}(\mathbb{N}_0[\alpha]) = \{ \alpha^n : n \in [0,\sigma) \cap \mathbb{N}_0 \}.
\end{equation}

If $\mathbb{N}_0[\alpha]$ is finitely generated (and so atomic), then
\[
\sigma = \min \left\{ n \in \mathbb{N} : \alpha^n \in \langle \alpha^j : j \in \llbracket 0, n-1 \rrbracket \rangle \right\}.
\]
\end{proposition}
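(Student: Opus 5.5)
We prove the three assertions in order, working throughout in the additive monoid $M:=\N_0[\alpha]$, which is generated by the powers $\{\alpha^n:n\in\N_0\}$. Since $M\subseteq\R_{\ge0}$ and every nonzero element is positive, the only unit is $0$. Hence an atom is simply a nonzero element that is not a sum of two nonzero elements. Any atom must be a power $\alpha^n$: a generic nonzero element $f(\alpha)$ with $f\in\N_0[X]$ is a sum of monomials $\alpha^{n}$, and if $f$ has at least two monomial terms (counted with multiplicity) it splits as a sum of two nonzero elements. So $\mathcal A(M)\subseteq\{\alpha^n:n\in\N_0\}$.

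\textbf{Atomic versus antimatter.} We use that multiplication by $\alpha$ is an injective monoid endomorphism of $M$ with image $\alpha M$. We show that $\alpha^n\in\mathcal A(M)$ implies $1\in\mathcal A(M)$. Indeed, if $1=b+c$ with $b,c\neq0$, then $\alpha^n=\alpha^nb+\alpha^nc$ is a nontrivial decomposition, since $\alpha^nb,\alpha^nc\in M$. Hence if $1$ is not an atom, no $\alpha^n$ is an atom and $M$ is antimatter. Conversely, suppose $1$ is an atom. If atomicity failed there would be a nonzero element with no factorization, and we reduce to showing that each $\alpha^n$ is a finite sum of atoms. We proceed by induction on $n$. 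If $\alpha^n$ is not an atom, write it as a sum of powers via an identity $\alpha^n=f(\alpha)$ in which $f$ has at least two terms. The hard step is ensuring the exponents involved are smaller, or otherwise that the process terminates. For this we would argue by minimality: let $n$ be least such that $\alpha^n$ is not a finite sum of atoms. Among its nontrivial decompositions $\alpha^n=\sum\alpha^{k_i}$, if some $k_i\ge n$ then, after dividing by $\alpha^{n}$ when all $k_i\ge n$, we would get $1$ as a nontrivial sum, contradicting that $1$ is an atom. This handles the case where every $k_i\ge n$. When the exponents are mixed, we would instead use the standard argument of \cite{CMG}: the set of $n$ with $\alpha^n$ an atom is downward closed. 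This follows because, by the computation above, $\alpha^n$ not an atom implies $\alpha^{n+1}$ not an atom. Combining downward closedness with a bounded-length argument (lengths are bounded by $x/\min$ when $\alpha\ge1$, and by a separate argument when $\alpha<1$) gives atomicity. This termination issue is the main obstacle, and we would follow \cite{CMG}'s argument there.

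\textbf{Shape of the atom set.} Downward closedness of $\{n:\alpha^n\in\mathcal A(M)\}$ immediately yields \eqref{eq:atoms} with $\sigma$ equal to the supremum of that set plus one, possibly $\infty$.

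\textbf{Formula for $\sigma$ in the finitely generated case.} In an atomic monoid whose only unit is $0$, every generating set contains $\mathcal A(M)$. So finite generation forces $\sigma<\infty$, and then $M=\langle \alpha^j: j<\sigma\rangle$. In particular $\alpha^\sigma\in\langle\alpha^j:j\in\llbracket0,\sigma-1\rrbracket\rangle$. For any $n<\sigma$, $\alpha^n$ is an atom distinct from the smaller powers, so $\alpha^n$ does not lie in the submonoid generated by them; otherwise it would be a nontrivial sum, or would equal a single $\alpha^j$ with $j<n$, forcing $\alpha=1$, which is the degenerate case $M=\N_0$ with $\sigma=1$. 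This confirms that $\sigma$ is the stated minimum.
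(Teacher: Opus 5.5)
Your antimatter direction, the downward closedness of $\{n:\alpha^n\in\mathcal A(\N_0[\alpha])\}$, the resulting shape of the atom set, and the formula for $\sigma$ are all correct. The central implication, that $1\in\mathcal A(\N_0[\alpha])$ forces atomicity, is not proved. You only treat decompositions $\alpha^n=\sum_i\alpha^{k_i}$ with every $k_i\ge n$. For the ``mixed'' case you defer to \cite{CMG}, via a bounded-length argument that works only for $\alpha\ge 1$. For $\alpha<1$ that argument is unavailable, because $\N_0[\alpha]\setminus\{0\}$ has infimum $0$, and the ``separate argument'' you mention is exactly what is missing. The paper itself simply quotes this result from \cite{CMG}, so there is no in-paper proof to compare against. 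As a standalone proof, however, your write-up leaves this step open.

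The missing idea is a size comparison, and it shows the mixed case never occurs. Let $\alpha\neq 1$ and suppose $\alpha^n=\sum_i\alpha^{k_i}$ with at least two summands. Every summand is positive, so each is strictly smaller than the total: $\alpha^{k_i}<\alpha^n$ for all $i$.

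\begin{enumerate}
\item If $\alpha>1$, this forces every $k_i<n$. Strong induction on $n$ then shows each $\alpha^n$ is a sum of atoms. Alternatively, use $\N_0[\alpha]\setminus\{0\}\subseteq[1,\infty)$, which bounds the number of nonzero summands of $x$ by $x$; here $1$ is automatically an atom.
\item If $\alpha<1$, this forces every $k_i>n$. Dividing by $\alpha^n$ gives a nontrivial decomposition $1=\sum_i\alpha^{k_i-n}$ inside $\N_0[\alpha]$. Hence when $1$ is an atom and $\alpha<1$, every $\alpha^n$ is an atom, $\sigma=\infty$, and the monoid is atomic because it is generated by the powers.
\end{enumerate}

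With this observation, no appeal to \cite{CMG} is needed and the rest of your argument goes through. You should also record why finite generation gives atomicity, as the parenthetical in the statement asserts. One way: $\N_0[\alpha]\setminus\{0\}$ is then bounded below by its least generator, so the same bounded-length argument applies.
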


We deduce that, whenever $\Na$ is an FGM, we have $\sigma\geq \deg \m_\alpha$.

Note that $\Na$ is finitely generated if and only if $\Na$ is atomic and $\m_\alpha$ divides a negative tail polynomial.\\

By \cite{CMG} Theorem 5.4, we have $\sigma=\deg \m_\alpha$ if and only if $\Na$ is a UFM if and only if $\m_\alpha$ is a negative tail polynomial. We also recall the following results that we will directly use in this paper:

\begin{proposition}[\cite{CMG}, Proposition 4.5]\label{atomicity}
Let $\alpha \in \mathbb{R}_{>0}$ be an algebraic number with minimal polynomial $\m_{\alpha}(X)$. Then the following statements hold.
\begin{enumerate}
    \item If $\alpha \notin \mathbb{Q}$ and $|\m_{\alpha}(0)| \neq 1$, then $\mathbb{N}_0[\alpha]$ is atomic.
    \item If $\m_{\alpha}(X)$ has more than one positive root, then $\mathbb{N}_0[\alpha]$ is atomic.
\end{enumerate}
\end{proposition}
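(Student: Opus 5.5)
We prove the two parts of Proposition~\ref{atomicity} separately. In both, Proposition~\ref{generation} lets us reduce atomicity to the single claim $1\in\mathcal A(\Na)$. Suppose $1$ is not an atom. Then $1=f(\alpha)$ for some $f\in\N_0[X]$ with $f\neq 1$. Since $\alpha^k>0$, the constant term of $f$ must be $0$, because a constant term $\ge 1$ would force all other coefficients to vanish and then $f=1$. Hence
\[
1=\sum_{i\ge1} c_i\alpha^i, \qquad c_i\in\N_0,
\]
with not all $c_i$ zero. So $\alpha$ is a root of $g(X):=\sum_{i\ge1}c_iX^i-1\in\Z[X]$, and therefore $\m_\alpha$ divides $g$ in $\Q[X]$. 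Because $\m_\alpha$ is primitive, Gauss's lemma upgrades this to a divisibility $g=\m_\alpha h$ with $h\in\Z[X]$. Each part then comes from deriving a contradiction from this factorization.

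\textbf{Part 1.} Compare constant terms in $g=\m_\alpha h$ to get $-1=\m_\alpha(0)\,h(0)$. Since $h(0)\in\Z$, this forces $|\m_\alpha(0)|=1$, contradicting the hypothesis. The assumption $\alpha\notin\Q$ only excludes degenerate rational cases such as $\alpha=1/n$, where $\m_\alpha(X)=nX-1$ has $|\m_\alpha(0)|=1$ anyway. Apart from this bookkeeping it plays no role in the argument.

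\textbf{Part 2.} Look at $g$ on the positive axis. Its coefficients have exactly one sign change (a negative constant term followed by nonnegative coefficients), so by Descartes' rule of signs $g$ has at most one positive root. More directly, $g(x)=\sum c_ix^i-1$ is strictly increasing on $(0,\infty)$ because some $c_i>0$. Every root of $\m_\alpha$ is also a root of $g$. If $\m_\alpha$ had two distinct positive roots, $g$ would have two positive roots, which is a contradiction. A minimal polynomial is separable, so "more than one positive root" does mean two distinct ones.

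The only step needing care is the passage from divisibility in $\Q[X]$ to integrality of $h(0)$ in Part~1, which is exactly where Gauss's lemma and the primitivity of $\m_\alpha$ are used. Everything else is elementary.
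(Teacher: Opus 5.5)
Your proof is correct. The paper gives no proof of its own here; it imports the result from \cite{CMG}. Your argument is the standard one behind that citation: reduce via Proposition~\ref{generation} to $1\in\mathcal A(\Na)$. For Part 1, read off the constant term of $g=\m_\alpha h$, where $h\in\Z[X]$ by Gauss's lemma because $\m_\alpha$ is the primitive integer polynomial. For Part 2, use that $\sum_{i\ge1}c_iX^i-1$ has exactly one positive root.
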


\begin{proposition}[\cite{CMG}, Proposition 5.6]\label{onepositiveroot}
If $\mathbb{N}_0[\alpha]$ is a finitely generated monoid (FGM) for some algebraic $\alpha \in \mathbb{R}_{>0}$, 
then $m_\alpha(X) \in \mathbb{Z}[X]$ and its only positive root is $\alpha$ (counting multiplicity).
\end{proposition}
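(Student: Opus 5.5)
The plan is to extract from finite generation an explicit monic integer polynomial with nonnegative lower coefficients that vanishes at $\alpha$. Then Gauss's lemma gives integrality of $m_\alpha$, and Descartes' rule of signs gives uniqueness of the positive root.

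\emph{Step 1 (a negative-tail relation).} Assume $\mathbb{N}_0[\alpha]$ is an FGM. A finitely generated cancellative monoid whose only unit is $0$ is atomic; here $\mathbb{N}_0[\alpha]\subseteq\mathbb{R}_{\ge 0}$, so $0$ is the only unit. By Proposition~\ref{generation}, the atoms are $\alpha^n$ for $n\in\llbracket 0,\sigma-1\rrbracket$. Since $\mathcal{A}$ is finite, $\sigma<\infty$. Since $\alpha^\sigma\in\mathbb{N}_0[\alpha]$ is not an atom, it is a sum of atoms. This is exactly the minimality description of $\sigma$ in Proposition~\ref{generation}, and it gives
\[
\alpha^\sigma=\sum_{j=0}^{\sigma-1}c_j\alpha^j,\qquad c_j\in\mathbb{N}_0 .
\]
Not all $c_j$ vanish, because $\alpha>0$. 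Hence $\alpha$ is a root of the negative-tail polynomial $H(X)=X^\sigma-\sum_{j<\sigma}c_jX^j\in\mathbb{Z}[X]$, with $\sigma\ge 1$.

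\emph{Step 2 (integrality).} Since $H$ is monic with integer coefficients, $\alpha$ is an algebraic integer. Concretely, $m_\alpha$ divides $H$ in $\mathbb{Q}[X]$. Write $H=m_\alpha g$ with $m_\alpha,g$ monic in $\mathbb{Q}[X]$. Gauss's lemma, applied to a factorization of the monic integer polynomial $H$ into monic rational factors, forces $m_\alpha\in\mathbb{Z}[X]$. In particular $m_\alpha=\mathfrak{m}_\alpha$.

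\emph{Step 3 (unique positive root).} The coefficient sequence of $H$ is $1$ followed by the nonpositive numbers $-c_{\sigma-1},\dots,-c_0$, at least one of which is nonzero. So it has exactly one sign change. By Descartes' rule of signs, the number of positive roots of $H$ counted with multiplicity is at most $1$ and has the same parity as $1$, hence is exactly $1$. That root is $\alpha$, and it is simple.

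Since $m_\alpha\mid H$, every positive root of $m_\alpha$, counted with multiplicity, is a positive root of $H$ with at least that multiplicity. Therefore $\alpha$ is the only positive root of $m_\alpha$, and it has multiplicity one.

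The only delicate point is Step 1. One has to make sure finite generation really yields finitely many atoms and hence a finite $\sigma$, rather than merely some relation among powers; for this we use the description of $\mathcal{A}(\mathbb{N}_0[\alpha])$ in Proposition~\ref{generation}. One also has to rule out the degenerate case where all $c_j=0$, which is excluded because $\alpha\neq 0$. Everything after that is standard.
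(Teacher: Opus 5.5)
Your proof is correct and follows the standard route. The paper quotes this result from \cite{CMG} without proof, but it runs exactly your argument in the forward direction of Theorem~\ref{perron}: extract a negative-tail relation $\alpha^n=\sum_{i<n}c_i\alpha^i$ from finite generation, then apply Gauss's lemma for integrality and Descartes' rule of signs for the unique positive root. The one step you leave implicit is the finiteness of $\mathcal{A}(\Na)$, which holds because in a reduced monoid every atom must belong to any generating set; the degenerate case $\alpha=1$ simply gives $H=X-1$.
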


\begin{proposition}[\cite{CMG}, Theorem 5.4]\label{ufm}
Let $\alpha \in \mathbb{R}_{>0}$ be an algebraic number. The following statements hold:
\begin{enumerate}[(1)]
    \item If $\mathbb{N}_0[\alpha]$ is a UFM, then it is finitely generated.
    
    \item Suppose that $\alpha$ has algebraic degree $d$, minimal polynomial $m_{\alpha}(X)$, and minimal pair $(p(X), q(X))$. Then the following conditions are equivalent:
    \begin{enumerate}[(a)]
        \item $\mathbb{N}_0[\alpha]$ is a UFM;
        \item $\mathbb{N}_0[\alpha]$ is an HFM;
        \item $\deg m_{\alpha}(X) = |\mathcal{A}(\mathbb{N}_0[\alpha])|$;
        \item $p(X) = X^{d}$ for some $d \in \mathbb{N}$.
    \end{enumerate}
\end{enumerate}
\end{proposition}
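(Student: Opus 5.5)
The plan is to prove part (2) through the cycle (d)$\Rightarrow$(a)$\Rightarrow$(b)$\Rightarrow$(c)$\Rightarrow$(d), and then deduce part (1). Throughout I will use Proposition~\ref{generation}: in the atomic case the atoms are exactly $\alpha^n$ for $0\le n<\sigma$. I will also use that $1,\alpha,\dots,\alpha^{d-1}$ are $\Q$-linearly independent. That independence forces $\sigma\ge d$ in the atomic case. Indeed, an identity $\alpha^n=f(\alpha)$ with $n<d$ and $f\in\N_0[X]$ a combination of lower powers would give a nonzero polynomial of degree $<d$ vanishing at $\alpha$.

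\emph{(d)$\Rightarrow$(a).} If $p=X^d$, then $\m_\alpha=X^d-q$ with $q\in\N_0[X]$ and $\deg q<d$. Reducing modulo $\m_\alpha$ (legitimate because $\alpha^d=q(\alpha)$ has nonnegative coefficients), every element of $\N_0[\alpha]$ can be written as $\sum_{i<d}c_i\alpha^i$ with $c_i\in\N_0$. By linear independence this representation is unique.
\begin{itemize}
\item Consequently $1$ admits no other representation, so $1$ is an atom and $\N_0[\alpha]$ is atomic.
\item Each $\alpha^i$ with $i<d$ is an atom, again by uniqueness of coefficients.
\item There are no further atoms, since $\alpha^d=q(\alpha)$ and every higher power reduces to a combination of lower powers.
\end{itemize}
Uniqueness of coefficients is then exactly unique factorization, so $\N_0[\alpha]$ is a UFM.

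\emph{(a)$\Rightarrow$(b)} is immediate.

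\emph{(b)$\Rightarrow$(c).} An HFM is atomic, so the atoms are $\alpha^n$ for $n<\sigma$, and $\sigma\ge d$ by the remark above. Suppose $\sigma>d$. Then every monomial appearing in $p$ and $q$ is an atom, since all exponents are $\le d<\sigma$. The identity $p(\alpha)=q(\alpha)$ therefore gives two factorizations of the same element, of lengths $p(1)$ and $q(1)$. These lengths differ because $\m_\alpha(1)\ne0$; this holds since $\m_\alpha$ is irreducible, apart from the degenerate case $\alpha=1$, which I treat directly: there $\N_0[\alpha]=\N_0$ and $\m_\alpha=X-1$. This contradicts half-factoriality, so $\sigma=d$, i.e.\ $|\mathcal A(\N_0[\alpha])|=d$.

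\emph{(c)$\Rightarrow$(d).} If there are exactly $d$ atoms, then $\sigma=d$. By the formula for $\sigma$ in Proposition~\ref{generation} (the monoid being finitely generated, since it is atomic with finitely many atoms), we get $\alpha^d=g(\alpha)$ for some $g\in\N_0[X]$ with $\deg g<d$. Then $X^d-g$ vanishes at $\alpha$ and has degree $d$, so it is a rational multiple of $\m_\alpha$. Since $X^d-g$ is monic, it is primitive, and both polynomials have positive leading coefficient, so $\m_\alpha=X^d-g$. Hence $p=X^d$, because $g$ has nonnegative coefficients and its support avoids $d$.

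Part (1) then follows. A UFM is atomic, and by the implication (a)$\Rightarrow$(c) it has exactly $d$ atoms, so it is generated by the finite set $\{1,\alpha,\dots,\alpha^{d-1}\}$.

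The step I expect to be most delicate is (b)$\Rightarrow$(c). It requires confirming that every exponent appearing in $p$ and $q$ lies below $\sigma$, so that $p(\alpha)=q(\alpha)$ genuinely compares two factorizations. It also requires correctly handling the degenerate rational case $d=1$, where $\m_\alpha=bX-a$ and one must check that the length argument still applies or verify the equivalences directly.
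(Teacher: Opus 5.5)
Your proof is correct. This paper gives no proof of the statement: it is quoted from \cite{CMG} (Theorem 5.4). The only related reasoning in the text is the remark after Proposition~\ref{generation} that $\sigma\ge\deg\m_\alpha$, which is exactly your linear-independence observation. So there is no in-paper route to compare against. Your cycle (d)$\Rightarrow$(a)$\Rightarrow$(b)$\Rightarrow$(c)$\Rightarrow$(d), with part (1) read off from (a)$\Rightarrow$(c), is a valid self-contained derivation from Proposition~\ref{generation} and the $\Q$-linear independence of $1,\alpha,\dots,\alpha^{d-1}$.

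Two small tidy-ups. In (c)$\Rightarrow$(d) you use the description of the atoms and the formula for $\sigma$, both of which presuppose atomicity. This is immediate: $|\mathcal A(\Na)|=d\ge 1$ rules out the antimatter alternative in Proposition~\ref{generation}.

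The $d=1$ case you flag needs no separate treatment. Your length argument in (b)$\Rightarrow$(c) uses only $\alpha\ne1$, never $d\ge2$; that condition is what makes the powers $\alpha^n$ distinct atoms and gives $p(1)\ne q(1)$. Concretely, for $\m_\alpha=bX-a$ with $\alpha\ne1$ and $\sigma>1$, the identity $b\cdot\alpha=a\cdot 1$ already gives two factorizations of lengths $b\ne a$.

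For part (1) alone there is a shortcut: a UFM is free on its atoms, and its Grothendieck group $\Z[\alpha]$ has rank $d$, which forces $|\mathcal A(\Na)|=d$. That argument does not reach condition (b), however, so your length comparison is the right tool for closing the full cycle.
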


\begin{theorem}[\cite{CMG}, Theorem 5.9]\label{lfm}
Let $\alpha \in \mathbb{R}_{>0}$ be an algebraic number. The following conditions are equivalent:
\begin{enumerate}[(a)]
    \item $\mathbb{N}_0[\alpha]$ is a proper LFM;
    \item $\mathcal{A}(\mathbb{N}_0[\alpha]) = \{\alpha^{j} : j \in \llbracket 0, \deg m_{\alpha}(X) \rrbracket \}$.
\end{enumerate}
\end{theorem}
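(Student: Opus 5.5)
The plan is to translate factorizations into integer vectors and use the fact that the relations among atoms are governed by multiples of $\m_\alpha$. Assume $\Na$ is atomic; this is implicit in both (a) and (b). By Proposition~\ref{generation}, $\mathcal A(\Na)=\{\alpha^n : 0\le n<\sigma\}$. Put $d=\deg\m_\alpha$. Then a factorization of $x$ is a polynomial $f\in\N_0[X]$ with $\deg f<\sigma$ and $f(\alpha)=x$, and its length is $f(1)$. Two factorizations $f,f'$ of the same element satisfy $(f-f')(\alpha)=0$. Hence $f-f'=h\,\m_\alpha$ for some $h\in\Z[X]$, by Gauss's lemma since $\m_\alpha$ is primitive. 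Moreover $\deg h<\sigma-d$.

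Since $1,\alpha,\dots,\alpha^{d-1}$ are $\Q$-linearly independent, no one of them lies in the monoid generated by the others. Hence $\sigma\ge d$. By Proposition~\ref{ufm}, $\sigma=d$ exactly when $\Na$ is a UFM.

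\emph{(b)$\Rightarrow$(a).} Suppose $\sigma=d+1$. Then $\deg h<1$, so $f-f'=k\,\m_\alpha$ with $k\in\Z$, and $|f|-|f'|=k\,\m_\alpha(1)$.

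We claim $\m_\alpha(1)\neq0$. Otherwise $\alpha=1$ and $\m_\alpha=X-1$. But then $\Na=\N_0$ has the single atom $1$, so $\sigma=1=d$, contradicting $\sigma=d+1$.

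So equal lengths force $k=0$, that is, $f=f'$, and $\Na$ is an LFM. It is not a UFM because $\sigma\neq d$, using Proposition~\ref{ufm}(c). Hence it is a proper LFM.

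\emph{(a)$\Rightarrow$(b).} If $\Na$ is a proper LFM, then it is not a UFM, so $\sigma\ge d+1$ by the remarks above. Suppose for contradiction that $\sigma\ge d+2$, possibly $\sigma=\infty$. Then $\alpha^0,\dots,\alpha^{d+1}$ are all atoms.

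Consider $g(X)=(X-1)\m_\alpha(X)$. It is a nonzero integer polynomial of degree $d+1<\sigma$ with $g(\alpha)=0$ and $g(1)=0$. Write $g=g^+-g^-$ with disjoint supports. Then $g^+(\alpha)=g^-(\alpha)$, so $g^+$ and $g^-$ are two distinct factorizations of the same nonzero element; they are distinct since $g\neq0$, and the element is nonzero since $g$ has both positive and negative coefficients. Their lengths satisfy $g^+(1)=g^-(1)$. This contradicts the LFM property, so $\sigma=d+1$, which is (b).

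The only delicate points are the bookkeeping ones: showing $\sigma\ge d$ and ruling out the degenerate case $\alpha=1$ where $\m_\alpha(1)=0$. The core of the argument, that the relation lattice among the atoms has rank $\max(\sigma-d,0)$, is straightforward.
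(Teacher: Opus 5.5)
The paper gives no proof of this statement. It is quoted from \cite{CMG} and used as a black box, so there is no internal argument to compare with. Your proof is correct and self-contained.

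The natural alternative is to deduce the result from the general theory of length-factorial monoids. There, a non-UFM LFM has exactly $\operatorname{rank}+1$ atoms, the rank of $\Na$ is $\deg\m_\alpha$, and such a monoid is an LFM precisely when the generator of its relation lattice has nonzero coordinate sum. Your argument instead computes that lattice explicitly: by Gauss's lemma, the relations among $\alpha^0,\dots,\alpha^{\sigma-1}$ are exactly the $h\,\m_\alpha$ with $\deg h<\sigma-d$. For $\sigma=d+1$, every relation is $k\,\m_\alpha$, with length defect $k\,\m_\alpha(1)$. For $\sigma\ge d+2$, the explicit relation $(X-1)\m_\alpha$ has length defect zero. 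So you need nothing beyond Propositions~\ref{generation} and~\ref{ufm}. The general route is shorter to cite but hides exactly these two computations.

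You should make one point explicit rather than treat it as bookkeeping. For $\alpha=1$, condition (b) holds literally, because $\{\alpha^0,\alpha^1\}=\{1\}=\mathcal A(\N_0)$. Yet $\N_0$ is a UFM, so (a) fails. The equivalence as written therefore holds only for $\alpha\neq1$, or with (b) read as $|\mathcal A(\Na)|=d+1$, i.e.\ $\sigma=d+1$. Your step ``suppose $\sigma=d+1$'' silently adopts this reading. Your exclusion of $\alpha=1$ is valid under that reading, but it does not cover the literal statement, and you should say so.

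Your justification of $\sigma\ge d$ is terse but sound once atomicity is used. If $\sigma<d$, the nonzero non-atom $\alpha^\sigma$ would be a sum of the atoms $\alpha^0,\dots,\alpha^{\sigma-1}$. That contradicts the $\Q$-linear independence of $1,\alpha,\dots,\alpha^{d-1}$.
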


We also recall the following theorem:

\begin{theorem}[Descartes' Rule of Signs]\label{descarte}
Let
\[
f(X) = a_n X^n + \cdots + a_1 X + a_0 \in \mathbb{R}[X],
\]
with $a_n \neq 0$. The number of positive roots of $f$, counted with multiplicity, is at most the number of sign variations in the coefficient sequence $(a_n, \ldots, a_0)$, and differs from it by an even integer.
\end{theorem}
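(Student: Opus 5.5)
This is the classical Descartes' Rule of Signs, and I would prove it by induction on $n=\deg f$, combining a parity argument with Rolle's theorem. Write $V(f)$ for the number of sign variations in $(a_n,\dots,a_0)$, ignoring zeros, and $Z(f)$ for the number of positive roots counted with multiplicity.

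\emph{Reduction.} If $a_0=0$, write $f=X^k g$ with $g(0)\neq 0$. Then $Z(f)=Z(g)$, since $X^k$ has no positive roots, and $V(f)=V(g)$, since the coefficient sequence only loses trailing zeros. So I may assume $a_0\neq 0$ throughout.

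\emph{Parity.} Assume $a_0\ne 0$. The sign of $f$ on $(0,\varepsilon)$ for small $\varepsilon>0$ is the sign of $a_0$, and the sign of $f$ on $(R,\infty)$ for large $R$ is the sign of $a_n$. The function $f$ changes sign across a positive root exactly when that root has odd multiplicity. Hence $Z(f)$ is odd iff $a_0a_n<0$. On the other hand, the number of sign changes in a sequence of nonzero signs running from $\operatorname{sgn} a_n$ to $\operatorname{sgn} a_0$ is odd iff these endpoint signs differ. Therefore $Z(f)\equiv V(f)\pmod 2$, which is the ``differs by an even integer'' part once the bound is established.

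\emph{Bound.} Induct on $n$; the case $n=0$ is trivial. For $n\ge 1$ with $a_0\neq 0$, let $a_m$ be the first nonzero coefficient with $m\ge 1$. Such an $m$ exists because $a_n\neq 0$.
\begin{itemize}
\item The coefficient sequence of $f'$ is $(na_n,\dots,ma_m,\dots)$, which is the sequence of $f$ with $a_0$ deleted and positive scalings applied. So $V(f')=V(f)-1$ if $a_0a_m<0$, and $V(f')=V(f)$ otherwise.
\item By Rolle's theorem, between consecutive distinct positive roots of $f$ there is a root of $f'$, and a root of $f$ of multiplicity $r$ is a root of $f'$ of multiplicity $r-1$. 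Hence $Z(f')\ge Z(f)-1$.
\item Applying the induction hypothesis to $f'$ (after the reduction step, if $f'(0)=0$) gives $Z(f')\le V(f')$.
\end{itemize}
There are two cases.
\begin{itemize}
\item If $V(f')=V(f)-1$, then $Z(f)\le Z(f')+1\le V(f)$.
\item If $V(f')=V(f)$, then the bound $Z(f)\le V(f)+1$ is one too weak. Here I use parity. The parity statement holds for $f'$ as well, since it needs no induction, so $Z(f')\equiv V(f')=V(f)\equiv Z(f)\pmod 2$. Thus $Z(f)$ and $Z(f')$ have the same parity, and $Z(f)\le Z(f')+1$ forces $Z(f)\le Z(f')\le V(f)$.
\end{itemize}

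The main point needing care is this second case, where the parity statement must be available independently of the induction. It is, because it only uses the signs of the extreme nonzero coefficients.
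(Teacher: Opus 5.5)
The paper gives no proof of this statement. It simply recalls Descartes' Rule of Signs as a classical fact; the bibliography lists Curtiss's 1918 survey of extensions of the rule, though the text does not cite it. So there is no argument in the paper to compare yours against.

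Your proof is correct and is one of the standard self-contained proofs. The following steps are all sound:
\begin{itemize}
\item the reduction $f=X^k g$ with $g(0)\neq 0$;
\item the parity claim, obtained from the signs of $f$ near $0^+$ and near $+\infty$ (sign changes of $f$ occur exactly at odd-multiplicity roots);
\item the Rolle count $Z(f')\ge Z(f)-1$;
\item the relation that $V(f)$ equals $V(f')+1$ or $V(f')$ according as $a_0a_m<0$ or not;
\item the use of parity to exclude $Z(f)=Z(f')+1$ when $V(f')=V(f)$.
\end{itemize}

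The only thing to tidy is the induction format. State it as strong induction on the degree, i.e.\ assume the bound for all polynomials of degree $<n$. After you strip a power of $X$ from $f'$, the resulting polynomial can have degree strictly less than $n-1$.

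What your route buys over the paper's bare citation is self-containment. The paper uses both halves you prove. The parity half is used in the lemma on minimal polynomials with exactly two sign variations. The bound half (a negative-tail polynomial has at most one positive root) is used in Theorem~\ref{perron} and in the cubic examples.
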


We can directly deduce from this rule the following lemma:

\begin{lemma}
Suppose that $\m_\alpha(X)$ has exactly two sign variations. If $\m_\alpha$ has a positive root and $\Na$ is atomic, then $\Na$ is infinitely generated.
\end{lemma}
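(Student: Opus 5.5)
We argue by contradiction, combining Proposition~\ref{onepositiveroot} with Descartes' rule (Theorem~\ref{descarte}). Assume $\m_\alpha(X)$ has exactly two sign variations, has a positive root (namely $\alpha$), and that $\Na$ is atomic. Suppose, for contradiction, that $\Na$ is finitely generated. By Proposition~\ref{onepositiveroot}, $\alpha$ is then the only positive root of $\m_\alpha$, counted with multiplicity. So $\m_\alpha$ has exactly one positive root.

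We now apply Descartes' rule. The number of positive roots of $\m_\alpha$, counted with multiplicity, is at most $2$ and has the same parity as $2$. Hence it is either $0$ or $2$. The hypothesis that $\m_\alpha$ has a positive root rules out $0$, so there are exactly two positive roots counted with multiplicity. This contradicts the conclusion of the previous paragraph. Therefore $\Na$ is not finitely generated, i.e.\ it is infinitely generated.

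We can also avoid Proposition~\ref{onepositiveroot} and use the same count with part~(2) of Proposition~\ref{atomicity} as a consistency check. Since $\m_\alpha$ is irreducible over $\Q$, it is separable. So its two positive roots are distinct: $\alpha$ and some $\beta\neq\alpha$. Then $\m_\alpha$ has more than one positive root, which by itself already gives atomicity; the atomicity hypothesis is therefore automatic once a positive root exists.

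The remaining argument is the divisibility criterion. If $\Na$ were finitely generated, $\m_\alpha$ would divide a negative-tail polynomial $H$. A negative-tail polynomial has exactly one sign variation, so by Descartes it has exactly one positive root. But $H$ would vanish at both $\alpha$ and $\beta$, which is a contradiction. The only delicate step is the parity part of Descartes' rule, which is exactly what excludes the possibility of a single positive root. Beyond that, the argument is routine.
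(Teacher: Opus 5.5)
Your proof is correct and follows essentially the paper's route: the parity part of Descartes' rule forces two positive roots counted with multiplicity, which contradicts Proposition~\ref{onepositiveroot}, while separability together with Proposition~\ref{atomicity}(2) gives distinctness and atomicity, as in the paper. Your remark that the multiplicity clause of Proposition~\ref{onepositiveroot} already makes separability unnecessary for the non-finite-generation conclusion is a small sharpening, and your closing negative-tail/Descartes paragraph simply reproves that proposition.
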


\begin{proof}
By the theorem above, the number of positive roots of $\m_\alpha$, counted with multiplicity, is even. Since $\m_{\alpha}$ has a positive root, it must have at least two positive roots, counted with multiplicity.\\

Because $\m_\alpha$ is irreducible over $\mathbb{Q}$ and the ground field has characteristic $0$, the polynomial is separable. Thus all roots are simple, so it has at least two distinct positive roots, then by proposition \ref{atomicity} and proposition \ref{onepositiveroot} $\Na$ is infinitely generated.
\end{proof}

\begin{theorem}[\cite{Ferguson1997}]\label{ferguson}
Suppose that the irreducible polynomial $f(x)\in \mathbb{Z}[x]$ has $m$ roots, at least one of which is real, on the circle $|z|=c$ Then $f(x)=g(x^{m})$, where $g(x)$ has no more than one real root on any circle in $\mathbb{C}$.
\end{theorem}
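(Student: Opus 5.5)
The plan is to exploit the Galois symmetry forced by the relation $\beta\bar\beta=c^2$ for roots on the circle, and to show that the ratios of roots on the circle to the real root form a cyclic group of roots of unity of order $m$.

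Let $r$ be the real root of $f$ with $|r|=c$, so $r=\pm c$ and $c^2=r^2$. Let $S=\{\beta_1,\dots,\beta_m\}$ be the roots of $f$ on $|z|=c$. For each $\beta\in S$ we have $\bar\beta=r^2/\beta$, and $\bar\beta$ is again a root of $f$, since $f$ has real coefficients. So every $\beta\in S$ admits a root $\beta'=\bar\beta$ with $\beta\beta'=r^2$.

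\textbf{Step 1: transport to every root.} Let $L$ be the splitting field and $G=\mathrm{Gal}(L/\mathbb{Q})$. Irreducibility makes $G$ transitive on the roots, so for every root $\gamma$ there is $\sigma\in G$ with $\sigma(r)=\gamma$. Applying $\sigma$ to the $m$ relations $\beta\beta'=r^2$ gives $m$ roots $\delta=\sigma(\beta)$, each paired with a root $\delta'$ such that $\delta\delta'=\gamma^2$. Counting the pairs $(\delta,\delta')$ with $\delta\delta'=\gamma^2$ should therefore give a quantity independent of $\gamma$, equal to $m$.

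\textbf{Step 2: the ratios form a group.} Set $H=\{\beta/r:\beta\in S\}$. I would show that $H$ is closed under multiplication and inversion. Closure under inverses is immediate: $\bar\beta/r=r/\beta$. For products, given $\beta_1,\beta_2\in S$, I would use Step 1 with $\gamma=\beta_1$: apply an automorphism sending $r\mapsto\beta_1$ to the relation $r\cdot r=r^2$ and to the relation for $\beta_2$. This should show that $\beta_1\beta_2/r$ is a root with the right product property, and hence lies on the circle. This is where I expect the main obstacle: controlling moduli after a Galois action, which need not preserve $|z|=c$. My first attempt would be the maximality-counting argument. Since the count $m$ is the same for every $\gamma$, choosing $\gamma$ of maximal modulus forces all the paired roots onto the circle $|z|=|\gamma|$, and then symmetry transfers this back to $r$. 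Once closure holds, $H$ is a finite subgroup of $\mathbb{C}^\times$ of order $m$, so $H=\mu_m$ and $S=r\mu_m$.

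\textbf{Step 3: conclude $f(x)=g(x^m)$ and the uniqueness claim.} Transporting $S=r\mu_m$ by Galois shows that for every root $\gamma$, $\gamma\zeta$ is a root for all $\zeta\in\mu_m$. Hence the root multiset is invariant under multiplication by $e^{2\pi i/m}$, so $f(\zeta x)=\zeta^{\deg f}f(x)$. Comparing coefficients, only exponents in one residue class mod $m$ occur, and since $f(0)\neq0$ (by irreducibility, unless $f=x$), that class is $0$; so $f(x)=g(x^m)$. Finally, suppose $g$ had two real roots $s_1,s_2$ on a circle $|z|=c'$. Then $f$ would have $2m$ roots on $|z|=c'^{1/m}$, at least one of them real or producing a real root after the same argument. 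Applying Steps 1–2 there gives a group of order $2m$, contradicting the maximality of the decomposition. Alternatively, one can iterate the theorem to $g$ and derive a contradiction with $\deg$.
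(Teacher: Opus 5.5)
The paper gives no proof of this statement (it is quoted from Ferguson), so your sketch has to stand on its own. It has a genuine gap at Step~2, and that gap also leaves the ``equal to $m$'' of Step~1 unproved.

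\textbf{The missing modulus fact.} You need: if $\gamma,\delta,\delta'$ are roots with $\delta\delta'=\gamma^2$, then $|\delta|=|\delta'|=|\gamma|$, for \emph{every} root $\gamma$. Your maximal-modulus trick gives this only when $|\gamma|$ is the largest modulus. ``Symmetry transfers this back to $r$'' fails because elements of $G$ do not preserve absolute values. Without this fact, $\{\delta : r^2/\delta \text{ is a root}\}$ could a priori contain roots off $|z|=c$. Then the common count need not be $m$, and you cannot conclude $\sigma(S)\subseteq S$ when $\sigma(r)\in S$. One way to prove it is an $L^2$ identity. Pick $\tau,\tau'\in G$ with $\tau(\gamma)=\delta$ and $\tau'(\gamma)=\delta'$, and set $A_\sigma=\log|\sigma(\delta)|$, $B_\sigma=\log|\sigma(\delta')|$, $C_\sigma=\log|\sigma(\gamma)|$. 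Then $A_\sigma+B_\sigma=2C_\sigma$. Since $\sigma\mapsto\sigma\tau$ and $\sigma\mapsto\sigma\tau'$ permute $G$, we get $\sum_\sigma A_\sigma^2=\sum_\sigma B_\sigma^2=\sum_\sigma C_\sigma^2$. Hence $\sum_\sigma(A_\sigma-B_\sigma)^2=2\sum_\sigma A_\sigma^2+2\sum_\sigma B_\sigma^2-\sum_\sigma(A_\sigma+B_\sigma)^2=0$.

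\textbf{The missing product closure.} Even granting the modulus fact, applying $\sigma$ with $\sigma(r)=\beta_1$ to $\beta_2\bar\beta_2=r^2$ only yields $\sigma(\beta_2)\sigma(\bar\beta_2)=\beta_1^2$, i.e.\ $\sigma(\beta_2)\in S$. Nothing there produces $\beta_1\beta_2/r$ as a root, so closure of $H$ under products is not established.

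\textbf{What does work.} With the modulus fact, $\sigma(r)=\beta\in S$ implies $\sigma(S)=S$. Applying $\sigma$ to $z\cdot(r^2/z)=r^2$ for $z\in S$ then shows that $S$ is stable under $w\mapsto\beta^2/w$ for every $\beta\in S$. Composing with $w\mapsto r^2/w$, $S$ is stable under multiplication by $(\beta/r)^2$. So $H$ is finite, contains $1$, and is stable under multiplication by each $u^2$ with $u\in H$. Hence the group $W=\langle u^2 : u\in H\rangle$ lies in $H$, so $W=\mu_k$ for some $k$. Then $H\subseteq\mu_{2k}$ and $H$ is a union of $\mu_k$-cosets, so $H=\mu_k$ or $H=\mu_{2k}$. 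That is the group structure you need, and your Step~3 derivation of $f(x)=g(x^m)$ then goes through.

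\textbf{The final part.} Replace ``maximality of the decomposition'' by the concrete contradiction. Suppose $g$ had both roots $\pm\rho$ with $\rho>0$. Then $f$ has at least $2m$ roots on $|z|=\rho^{1/m}$, among them the positive real $\rho^{1/m}$. The first part, applied to that circle, makes the root set of $f$ stable under $\mu_{m'}$ with $m'\ge 2m$. That forces at least $2m>m$ roots on $|z|=c$, a contradiction.
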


\begin{proposition}[\cite{ChenGottiLuYao2026}, Proposition 4.4]\label{simple-polynomial}
Let $\alpha \in \mathbb{A}\cap(0,1)$. If $\alpha^{-1}$ is a Perron number with no positive conjugate aside from itself, then there exists a polynomial $h(x)\in \mathbb{Z}[x]$ such that $h(x)m_\alpha(x)\in x\mathbb{N}_0[x]-1$ and $h(x)m_\alpha(x)$ is simple.
\end{proposition}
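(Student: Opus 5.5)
The plan is to pass to the reciprocal $\beta:=\alpha^{-1}>1$ and reduce the claim to a negative-tail divisibility statement for $\beta$. Write $d=\deg \m_\alpha$ and $\m_\beta(X)=\pm X^d\m_\alpha(1/X)$, which is irreducible with root $\beta$. The key observation is that reversing coefficients turns the two shapes into each other. If $H(X)=X^n-\sum_{i<n}a_iX^i$ with $a_i\in\N_0$ and $\m_\beta\mid H$, then
\[
-X^nH(1/X)=\sum_{i<n}a_iX^{n-i}-1\in X\N_0[X]-1 .
\]
This polynomial is divisible by $\m_\alpha$, and the quotient lies in $\Z[X]$ by Gauss's lemma, since $\m_\alpha$ is primitive. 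So it suffices to produce a negative-tail multiple $H$ of $\m_\beta$. I would then pick $H$ of minimal degree, or prune redundant terms, so that the reversed product is \emph{simple} in the sense required.

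To build $H$, factor $\m_\beta(X)=(X-\beta)u(X)$ over $\R$. By hypothesis $u$ has no positive real root. Since $\beta$ is Perron, every root of $u$ has modulus strictly less than $\beta$. I would look for a multiplier of the form $u(X)P(X)$ and set
\[
H=(X-\beta)\,u(X)P(X)=(X-\beta)Q(X),\qquad Q=\sum_i c_iX^i .
\]
Then $H$ has coefficients $c_{i-1}-\beta c_i$ below the top and leading coefficient $c_{n-1}$. Hence $H$ is negative-tail, up to normalizing the leading coefficient, exactly when $c_{n-1}>0$ and $c_{i-1}\le \beta c_i$ for all $i$. That is, $Q$ must have positive coefficients that grow at least geometrically with ratio $\beta^{-1}$ as the degree decreases.

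I would obtain such a $Q$ in two stages. First, since $u$ has no positive roots, a Meissner/Pólya-type theorem gives $k$ with $(1+X)^ku(X)$ having strictly positive coefficients. Second, I would push the coefficient ratios to the required geometric regime by replacing $u$ with a high power-like truncation. A natural choice is to take $Q$ to be the polynomial part of $\frac{X^N}{1-\beta^{-1}X}$ adjusted modulo $u$. Concretely, I would use the quotient of $X^N$ by $\m_\beta$, namely $X^N=\m_\beta S_N+R_N$. The dominance of $\beta$ makes the coefficients of $S_N$ behave like $\beta^{N-i}\cdot(\text{const})+o(\beta^{N-i})$, and these satisfy the required inequalities for large $N$. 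Integrality is automatic because $\m_\beta$ is monic when $\beta$ is an algebraic integer, which is what Perron provides.

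The main obstacle is controlling the error terms so that the inequalities $c_{i-1}\le\beta c_i$ hold for \emph{all} indices, including the low-degree coefficients where the $\beta^{N-i}$ main term is small. There the contributions from the other conjugates $\gamma$, of size about $|\gamma|^{N-i}$, and the remainder $R_N$ are not negligible. I would first try to absorb the low-degree part by multiplying by $(1+X)^k$ and a large constant. If that fails, I would invoke a cone argument: in $\Q(\beta)\otimes\R\cong\R\times\R^{d-1}$, the element $\beta^N$ lies deep in the open cone generated by $1,\beta,\dots,\beta^{N-1}$ with nonnegative coefficients. This is because its real coordinate dominates, and the condition that $\beta$ is the only positive conjugate rules out obstructions from other real embeddings. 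A lattice-point argument inside that cone then yields nonnegative integer $a_i$.
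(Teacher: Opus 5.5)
The paper does not prove this proposition; it quotes it from Chen--Gotti--Lu--Yao, so your argument has to stand on its own. The reversal step is correct. A monic negative-tail multiple $H$ of $\m_\beta$, with $\beta=\alpha^{-1}$, gives $-X^nH(1/X)\in X\N_0[X]-1$, and this is divisible by $\m_\alpha$. You are also right that integrality of $\beta$ is essential. It is part of Lind's definition of a Perron number, but not of the definition written in this paper, and without it the statement fails: for $\alpha=2/3$ the constant term of $h(x)(x-\tfrac23)$ is $-\tfrac23h(0)\neq-1$.

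The gap is the existence of $H$, and your concrete construction does not produce it. Your choice $H=\m_\beta S_N=X^N-R_N$ is negative-tail iff $R_N\in\N_0[X]$, i.e.\ iff $\beta^N$ has nonnegative coordinates in the basis $1,\beta,\dots,\beta^{d-1}$. By Euler's formula for the trace-dual basis, the $j$-th coordinate equals $\beta^N b_j/\m_\beta'(\beta)+O(\rho^N)$ with $\rho<\beta$. Here $b_j$ is the $X^j$-coefficient of $u$, and $\m_\beta'(\beta)=u(\beta)>0$. So the obstruction is the sign of the \emph{main} term, not an error term, and ``$u$ has no positive root'' does not make the $b_j$ positive.

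The paper's own example $\m_\beta=X^3-3X^2+5X-8$ shows this. There $\beta\approx2.33$ and $u=X^2+(\beta-3)X+8/\beta$. Although $\beta^5=5\beta^2+4\beta+32$ happens to work, the $\beta$-coordinate of $\beta^N$ is negative for all large $N$; for instance $\beta^9=243\beta^2-173\beta+1096$. Your proposed repairs do not help:
\begin{itemize}
\item multiplying by $(1+X)^k$ with $k$ large adds $k$ to the subleading coefficient;
\item a constant factor destroys monicity, which you need to get exactly $-1$ after reversal;
\item P\'olya's theorem gives positivity of the coefficients of $(1+X)^ku$, which is not the required condition $c_{i-1}\le\beta c_i$.
\end{itemize}

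The cone argument is a viable route, but its decisive step is only asserted. Let $\iota:\Q(\beta)\to\R^d$ be the Minkowski embedding and $e_\beta$ the unit vector at the identity place. Dominance of the real coordinate only gives $\beta^{-N}\iota(\beta^N)\to e_\beta$. What you must show is that $e_\beta$ is an \emph{interior} point of the cone $C_M$ spanned by $\iota(1),\dots,\iota(\beta^M)$ for some \emph{fixed} $M$.

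Dually, every nonzero functional $\ell$ with $\ell(\iota(\beta^k))\ge0$ for all $k$ must have positive $\beta$-component. Equivalently, a real sequence $\sum_{\sigma\neq\mathrm{id}}\lambda_\sigma\sigma(\beta)^k$ that is not identically zero must take negative values. This is exactly where the hypothesis is used, and it concerns complex conjugates as much as real ones. Among the conjugates of maximal modulus $\rho$ carrying nonzero $\lambda_\sigma$, none is positive real. So $\rho^{-k}$ times their contribution is a mean-zero almost periodic sequence with positive mean square, hence $<-\varepsilon$ infinitely often.

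You then need the lattice-point lemma in usable form. Since $\Z[\beta]=\bigoplus_{j<d}\Z\beta^j$, the semigroup $\langle1,\beta,\dots,\beta^M\rangle$ contains every $x\in\Z[\beta]$ with $\iota(x)\in v+C_M$, for some $v$. For large $N$ we have $\iota(\beta^N)\in v+C_M$, which gives $\beta^N=\sum_{j\le M}a_j\beta^j$ with $N>M$.

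Finally, you do not address the ``simple'' conclusion. Pruning terms of $H$ destroys $H(\beta)=0$, and choosing $H$ of minimal degree yields no particular property by itself. This paper never defines simplicity and only uses membership in $X\N_0[X]-1$. Whatever the term means in the cited source, it needs its own argument.
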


\begin{proposition}[\cite{dani2025set}, Proposition 5.1]\label{x_k_atoms}
Let $\alpha$ be an algebraic number with minimal polynomial $m(x)\in \mathbb{Q}[x]$ such that $m(x^k)$ is irreducible in $\mathbb{Q}[x]$ for some $k\in \mathbb{N}_{\ge 2}$. If $\beta$ is a root of $m(x^k)$, then $\left|\mathcal{A}(\N_0[\beta])\right| = k\left|\mathcal{A}(\Na)\right|$.
\end{proposition}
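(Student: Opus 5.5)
The plan is to show that $\N_0[\beta]$ is isomorphic, as an additive monoid, to the direct product $\N_0[\alpha]^k$. Atoms of a finite direct product of reduced cancellative monoids are exactly the tuples with a single nonzero coordinate, and that coordinate must be an atom. This gives $|\mathcal A(\N_0[\beta])| = k|\mathcal A(\N_0[\alpha])|$. The count includes the antimatter case, where both sides are $0$, and the infinitely generated case, where both sides are infinite.

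First I fix the normalization. Since $\N_0[\beta]\subseteq\R_{\ge0}$ only makes sense for $\beta>0$, I take $\beta$ to be a positive real root of $m(x^k)$. Then $\beta^k$ is a positive real root of $m$. I read the statement as $\beta^k=\alpha$, i.e.\ $\beta=\alpha^{1/k}$. If $\beta^k$ were a different positive conjugate, I would replace $\alpha$ by that conjugate. Because $m(x^k)$ is irreducible of degree $k\deg m$, we get $[\Q(\beta):\Q]=k\deg m$ and $[\Q(\alpha):\Q]=\deg m$. Hence $[\Q(\beta):\Q(\alpha)]=k$, and $1,\beta,\dots,\beta^{k-1}$ is a basis of $\Q(\beta)$ over $\Q(\alpha)$.

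Next I define the map $\Phi:\N_0[\alpha]^k\to\N_0[\beta]$ by $(x_0,\dots,x_{k-1})\mapsto\sum_{r=0}^{k-1}\beta^r x_r$. It is a monoid homomorphism, since it is the restriction of a $\Q(\alpha)$-linear map. For surjectivity, take $f\in\N_0[X]$ and group its monomials by exponent modulo $k$. This writes $f(X)=\sum_{r=0}^{k-1}X^r f_r(X^k)$ with $f_r\in\N_0[X]$. Then $f(\beta)=\sum_r\beta^r f_r(\alpha)$ with $f_r(\alpha)\in\N_0[\alpha]$. Injectivity follows from the linear independence of $1,\beta,\dots,\beta^{k-1}$ over $\Q(\alpha)$, because each $x_r$ lies in $\Q(\alpha)$. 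So $\Phi$ is an isomorphism.

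Finally, both monoids are reduced: the only unit of a submonoid of $\R_{\ge0}$ is $0$. In $M^k$ with $M$ reduced, an element with two nonzero coordinates splits nontrivially. An element $(0,\dots,a,\dots,0)$ is an atom if and only if $a\in\mathcal A(M)$, since any decomposition of it must be zero in the other coordinates. Therefore $|\mathcal A(M^k)|=k|\mathcal A(M)|$.

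The only delicate step is the degree count giving linear independence. That count is exactly where the irreducibility of $m(x^k)$ is used. A related point is identifying $\beta^k$ with $\alpha$, which requires the positivity remark above. Everything else is formal.
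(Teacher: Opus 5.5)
Your proof is correct. Note that the paper gives no proof of this statement. It imports the result from \cite{dani2025set} and uses it as a black box in the proof of Theorem~\ref{perron}, so there is no internal argument to compare against.

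Your route is self-contained. You build the isomorphism $\Phi:\N_0[\alpha]^k\to\N_0[\beta]$, $(x_0,\dots,x_{k-1})\mapsto\sum_r\beta^r x_r$. It is surjective by sorting exponents modulo $k$, and injective because $[\Q(\beta):\Q(\alpha)]=k$ makes $1,\beta,\dots,\beta^{k-1}$ independent over $\Q(\alpha)$. Counting atoms in a product of reduced monoids then handles the antimatter, finite and infinite cases at once. It also gives more than the count: it shows $\N_0[\beta]$ is finitely generated if and only if $\N_0[\alpha]$ is. That is exactly the consequence the paper extracts in Theorem~\ref{perron}, where the paper's $\alpha$ and $\lambda=\alpha^k$ play the roles of your $\beta$ and $\alpha$.

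One step is asserted rather than proved: the normalization $\beta^k=\alpha$. You say that otherwise you would replace $\alpha$ by the positive conjugate $\alpha'=\beta^k$. That replacement needs $|\mathcal A(\N_0[\alpha])|=|\mathcal A(\N_0[\alpha'])|$, which takes one line. For $f,g\in\N_0[X]$ one has $f(\alpha)=g(\alpha)\iff m\mid f-g\iff f(\alpha')=g(\alpha')$. So $f(\alpha)\mapsto f(\alpha')$ is a well-defined additive bijection $\N_0[\alpha]\to\N_0[\alpha']$. Add this and your argument covers the statement as literally written. In the paper's own application, $\beta^k=\alpha$ holds by construction anyway.
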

\section{General results}\label{general}

\begin{theorem}\label{pminusc}
    Write $\m_\alpha(X)=p_\alpha(X)-q_\alpha(X)$ with $q_\alpha(X)=c\in\N$. 

    \begin{enumerate}
        \item If $p_\alpha(X)=X^m$, then $\Na$ is a UFM.
        \item Otherwise, we have two cases:
        \begin{itemize}
            \item[i)] If $c=1$ then $\Na$ is antimatter.
            \item[ii)] If $c>1$ then $\Na$ is atomic and $\Na$ is infinitely generated. 
        \end{itemize} 
    \end{enumerate}
\end{theorem}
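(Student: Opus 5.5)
For part (1), if $p_\alpha(X)=X^m$ then $\m_\alpha(X)=X^m-c$ is itself a negative-tail polynomial, and $p_\alpha$ is the monomial $X^d$ with $d=m=\deg\m_\alpha$. So Proposition~\ref{ufm}, equivalence (a)$\Leftrightarrow$(d), gives directly that $\Na$ is a UFM.

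For part (2), I first record two structural facts. Since $q_\alpha=c$ is a nonzero constant and $\operatorname{supp}(p_\alpha)\cap\operatorname{supp}(q_\alpha)=\varnothing$, we have $p_\alpha(0)=0$. Also, $p_\alpha\neq X^m$ means that either $p_\alpha$ has at least two monomials, or $p_\alpha=aX^m$ with $a\ge 2$. In either case $p_\alpha(\alpha)$ is a sum of at least two nonzero elements of $\Na$.

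For case i), $c=1$ gives $1=p_\alpha(\alpha)$. By the remark above, $1$ is a sum of at least two nonzero elements of $\Na$. The only unit of $\Na\subseteq\R_{\ge 0}$ is $0$, so $1$ is not an atom, and Proposition~\ref{generation} gives that $\Na$ is antimatter.

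For case ii), $c>1$, atomicity splits into two cases.
\begin{itemize}
\item If $\alpha\notin\Q$, then $|\m_\alpha(0)|=c\neq1$, so Proposition~\ref{atomicity}(1) applies.
\item If $\alpha\in\Q$, then $\m_\alpha=aX-c$ with $\gcd(a,c)=1$ and $a\ge2$. Suppose $1=\sum_{i\ge1}n_i(c/a)^i$. Multiplying by $a^k$ gives $a^k=\sum n_i c^i a^{k-i}$. The right-hand side is divisible by $c$ but the left-hand side is not, so $1$ is an atom and $\Na$ is atomic by Proposition~\ref{generation}. In this rational case, the same divisibility argument also shows that no $\alpha^n$ is a sum of lower powers, so infinite generation follows immediately.
\end{itemize}

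For infinite generation in the irrational case, I argue by contradiction. Assume $\Na$ is finitely generated. Then $\m_\alpha$ divides a negative-tail polynomial $H(X)=X^n-\sum_{i<n}a_iX^i$, say $H=\m_\alpha g$ with $g\in\Z[X]$ by Gauss's lemma. Comparing leading coefficients forces $\m_\alpha$, and hence $p_\alpha$, to be monic. I would then fix a prime $\ell\mid c$ and reduce modulo $\ell$. Since $\m_\alpha\equiv p_\alpha$ and $p_\alpha=X^k r(X)$ with $k\ge1$ and $r(0)\neq0$, we get $H\equiv X^k r(X)g(X)\pmod \ell$. I would combine this with an $\ell$-adic Newton polygon analysis of $\m_\alpha$. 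There are $k$ roots $\gamma$ (in $\overline{\Q}_\ell$) with $v_\ell(\gamma)>0$, coming from the segment joining $(0,v_\ell(c))$ to $(k,0)$. Every other root has valuation $0$, and the non-monomial shape of $p_\alpha$ should make the unit roots interact nontrivially with the relation $H(\gamma)=0$. The aim is to show that $\gamma^n=\sum a_i\gamma^i$ with $a_i\ge0$ cannot hold for all roots of $\m_\alpha$ simultaneously, by comparing valuations together with the archimedean positivity coming from $\alpha$ being the unique positive root, via Descartes' rule (Theorem~\ref{descarte}), since $\m_\alpha$ has exactly one sign change.

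This last step is the main obstacle. A first attempt is to show by induction on $n$ that $\alpha^n\notin\langle 1,\dots,\alpha^{n-1}\rangle$. One writes a hypothetical relation $X^n-f(X)=\m_\alpha(X)g(X)$ with $f\in\N_0[X]$ of degree less than $n$, and tracks the constant term $f(0)=c\,g(0)$ together with the lowest-order coefficients modulo $c$. The non-monomiality of $p_\alpha$ should force some coefficient of $X^n-f$ below degree $n$ to be positive, contradicting the negative-tail shape. If this direct coefficient chase fails, I would fall back on the $\ell$-adic/Newton polygon argument sketched above.
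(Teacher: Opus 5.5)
You handle part (1), case i) (including the subcase $p_\alpha=aX^m$ with $a\ge 2$), atomicity in case ii), and the rational case $\m_\alpha=aX-c$ correctly. In places you are more careful than the paper, which invokes Proposition~\ref{atomicity} without checking $\alpha\notin\Q$.

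The genuine gap is the heart of the theorem: infinite generation in case ii) for irrational $\alpha$. You do not prove it. Moreover, none of the data you propose to use can decide the question: residues modulo $c$, reduction modulo a prime $\ell\mid c$, $\ell$-adic Newton polygons, and the sign-change count from Descartes' rule. The obstruction is about signs, not arithmetic. Compare $X^2+2X-4$, which falls under case ii), with the negative-tail polynomial $X^2-2X-4$, for which $\N_0[1+\sqrt5]$ is a UFM. The two are congruent modulo $c=4$ and have the same $2$-adic Newton polygon (both roots of valuation $1$). Each has one sign change and a unique positive root, yet the conclusions are opposite. The same example shows your count of exactly $k$ roots of positive valuation is wrong in general; it needs $\ell\nmid p_k$.

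The missing idea is a two-pass comparison of coefficients in $H=\m_\alpha g$. Here $\m_\alpha=\sum_{i=1}^{m}p_iX^i-c$ with $p_m=1$ (monic, as you correctly derived), and $g=\sum_{j=0}^{n-m}g_jX^j$ with $g_{n-m}=1$.
\begin{itemize}
\item For $j<n$, the coefficient of $X^j$ in $H$ is $\sum_{i=1}^{m}p_ig_{j-i}-cg_j\le 0$.
\item \emph{Upward pass.} Reading this from $j=0$ gives inductively $cg_j\ge\sum_{i=1}^{m}p_ig_{j-i}\ge 0$, so $g\in\N_0[X]$. This is exactly where it matters that $-c$ is the only negative coefficient of $\m_\alpha$.
\item \emph{Downward pass.} For $n-m<j<n$ we have $g_j=0$, so the coefficient of $X^j$ in $H$ equals that of $p_\alpha g$. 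That coefficient is nonnegative, while the coefficients of $H$ below degree $n$ are $\le 0$, so it vanishes.
\item If $p_r>0$ for some $1\le r\le m-1$, the product $p_rX^r\cdot X^{n-m}$ contributes $p_r>0$ in degree $n-m+r$, strictly between $n-m$ and $n$, and nothing can cancel it. Hence $p_\alpha=X^m$, a contradiction.
\end{itemize}
This is essentially the paper's argument. Your ``direct coefficient chase'' was the right instinct, but it must first control the signs of the cofactor and then look at the top degrees of $H$, not at low-order residues modulo $c$.

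Alternatively, a purely archimedean proof works if it uses the conjugates rather than Descartes' rule. Every root $\beta$ satisfies $c=|p_\alpha(\beta)|\le p_\alpha(|\beta|)$, so $|\beta|\ge\alpha$. If all roots had modulus $\alpha$, equality in the triangle inequality would give $\beta^{g}=\alpha^{g}$ with $g=\gcd(\operatorname{supp} p_\alpha)<m$, leaving at most $g<m$ distinct roots. Hence $\alpha$ is not weak Perron, and the forward direction of Theorem~\ref{perron} applies.
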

\begin{proof}
\begin{enumerate}
    \item by Proposition~\ref{ufm}.

    \item \begin{itemize}
            \item[i)] If $c=1$, we deduce from $\m_\alpha(\alpha)=0$ that $1=p_\alpha(\alpha)$, thus by Proposition~\ref{generation}, $\Na$ is antimatter. 
            
            \item[ii)]If $c>1$, by Proposition~\ref{atomicity}, $\Na$ is atomic.
    
    Let $p_\alpha(X)=X^m+p_{m-1}X^{m-1}+\dots+p_1X$ where $(p_1,\dots,p_{m-1})$ is a non-zero vector in $\N_0^{m-1}$. Suppose by contradiction that $\Na$ is finitely generated, then there exists a negative tail polynomial $H(X)$ such that $H(X)=\m_\alpha(X) \cdot Q(X)$ with $Q(X)\in\Z[X]$. Since $H$ and $\m_{\alpha}$ are both monic in $X$ then by Gauss's lemma $Q(X)$ is also monic. Let $m= \deg \m_\alpha$, and $n=\deg H$. Write 
    \[
Q(X)=X^{n-m}+q_{n-m-1}X^{n-m-1}+\dots+q_0,\ q_0,\dots,q_{n-m}\in \Z.
    \]

We will first prove that all the coefficients $q_i$ are nonnegative by comparing the coefficients of $H(X)$ with $m_\alpha(X).Q(X)$ in an increasing degree. The degree zero coefficient, implies that $-cq_0\leq 0$, which implies that $q_0\geq 0$, the degree $1$ implies that $-cq_1+p_1q_0\leq 0$, which implies that $q_1\geq \frac{p_1q_0}{c}\geq 0$, continuing in the same manner, we find that $\sum\limits_{i+j=n-m-1}p_iq_j-cq_{n-m-1}\leq 0$ which implies that $q_{n-m-1}\geq \frac{\sum\limits_{i+j=n-m-1}p_iq_j}{c}\geq 0$. Which proves the claim. 

Now starting the comparison at degree $n-1$, we have $p_{m-1}+q_{n-m-1}\leq 0$, and since both terms on the left-hand side are non-negative, they must be both zero. Taking the degree $n-2$, we get  $p_{m-2}+q_{n-m-2}\leq 0$, which again implies that both terms must be zero. Continuing in the same manner, if $n-m\geq m$, we get at degree $n-m+1$, $p_1+q_{n-2m+1}\leq 0$, which again implies that both terms must be zero, and this will contradict the fact that not all the $p_i$s are zero. Otherwise, if $n-m<m$, then at degree $m$, we get $p_{m-(n-m)}+q_0\leq 0$, which implies that all the $q_i$s are zero, which is again impossible since this will imply that $\m_\alpha=X^m-c$. 
    
        \end{itemize} 
\end{enumerate}
\end{proof}

\begin{theorem}\label{perron}
    If $\Na$ is a finitely generated monoid then $\alpha$ is a weak Perron number. 
    Conversely, suppose that $\alpha$ is an algebraic integer, that $\alpha$ is weak Perron, and that $\alpha$ is the unique positive conjugate of its minimal polynomial. Then $\Na$ is finitely generated.

\end{theorem}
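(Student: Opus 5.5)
For the forward direction we use the remark after Proposition~\ref{generation}: if $\Na$ is finitely generated, then $\m_\alpha$ divides a negative-tail polynomial $H(X)=X^n-\sum_{i=0}^{n-1}a_iX^i$ with $a_i\in\N_0$. Thus every conjugate $\beta$ of $\alpha$ is a root of $H$, and the plan is a Cauchy-type comparison. From $H(\beta)=0$ we get
\[
|\beta|^n=\Bigl|\sum_i a_i\beta^i\Bigr|\le \sum_i a_i|\beta|^i .
\]
Hence $g(|\beta|)\le 0$, where $g(t)=t^n-\sum_i a_it^i$. By Descartes' rule (Theorem~\ref{descarte}), $g$ has exactly one sign variation, so it has a unique positive root $r$. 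Moreover $g<0$ on $(0,r)$ and $g>0$ on $(r,\infty)$; equivalently, $t\mapsto \sum_i a_it^{i-n}$ is decreasing. Since $g(\alpha)=H(\alpha)=0$ and $\alpha>0$, we get $r=\alpha$, and therefore $|\beta|\le\alpha$. So $\alpha$ is weak Perron. This direction is routine once the divisibility criterion is invoked.

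For the converse, Proposition~\ref{simple-polynomial} is stated for Perron numbers whose inverse lies in $(0,1)$, so we reduce to it. First apply Ferguson's theorem (Theorem~\ref{ferguson}) to $\m_\alpha$ with $c=\alpha$. Suppose $m$ conjugates lie on the circle $|z|=\alpha$; one of them, $\alpha$, is real. Then $\m_\alpha(X)=g(X^m)$, and $g$ has at most one real root on each circle. Set $\gamma=\alpha^m$, a root of $g$. Since $\alpha$ is the unique positive conjugate, $\gamma$ should be the only conjugate of maximal modulus. We also expect $\gamma$ to be its unique positive conjugate, because a positive root $\delta\ne\gamma$ of $g$ would give the positive conjugate $\delta^{1/m}\ne\alpha$ of $\alpha$. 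Hence $\gamma$ is a Perron algebraic integer, and $\m_\alpha(X)=\m_\gamma(X^m)$ is irreducible. By Proposition~\ref{x_k_atoms}, $|\mathcal A(\Na)|=m\,|\mathcal A(\N_0[\gamma])|$, so it suffices to show that $\N_0[\gamma]$ is finitely generated.

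Now assume $\alpha$ is Perron. If $\alpha=1$, it is trivial. If $\alpha<1$, then all conjugates of $\alpha$ have modulus at most $\alpha<1$, so their product $|\m_\alpha(0)|$ is an integer less than $1$, which is impossible. So $\alpha>1$. Apply Proposition~\ref{simple-polynomial} to $\alpha^{-1}\in(0,1)$. Its inverse is Perron with no other positive conjugate, so there is $h\in\Z[X]$ such that $h(X)m_{\alpha^{-1}}(X)=X f(X)-1$ with $f\in\N_0[X]$. Reversing polynomials ($X\mapsto 1/X$, times $X^{N}$) turns this into a multiple of $\m_\alpha$ of the form $X^N-X^{N-1}\tilde f(X)$. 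Because $\alpha$ is an algebraic integer, the leading coefficient can be normalized to be monic via Gauss's lemma. The result is a negative-tail polynomial divisible by $\m_\alpha$.

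Atomicity must also be checked. If $|\m_\alpha(0)|\ne1$ and $\alpha\notin\Q$, Proposition~\ref{atomicity} gives it. Otherwise we verify $1\in\mathcal A(\Na)$ directly: $1=f(\alpha)$ with $f\in\N_0[X]$ nonconstant would make $\alpha$ a root of $f-1$, which has one sign change. Combining this with the preceding negative-tail relation, we would argue it contradicts $\alpha>1$, or else handle it via Proposition~\ref{generation}. The main obstacle is this reduction: justifying the Ferguson step and exactly matching the normalization in Proposition~\ref{simple-polynomial}, in particular that the reversed polynomial is monic with nonpositive lower coefficients. If atomicity fails for units, the statement implicitly needs it, and we would add that case check.
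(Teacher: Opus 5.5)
Your forward direction is the paper's argument (negative-tail relation, triangle inequality, Descartes). Your converse follows the same skeleton as the paper:
\begin{itemize}
\item Ferguson's theorem gives $\m_\alpha(X)=g(X^m)$, and you pass to $\gamma=\alpha^m$.
\item You get $\gamma>1$ from the norm of an algebraic integer.
\item You apply Proposition~\ref{simple-polynomial} to $\gamma^{-1}$.
\item You transfer back to $\alpha$ with Proposition~\ref{x_k_atoms}.
\end{itemize}

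\textbf{The gap.} It is the step you yourself flag: that $\gamma$ is Perron, which is exactly the hypothesis Proposition~\ref{simple-polynomial} requires. The reason you give (``since $\alpha$ is the unique positive conjugate'') does not imply it. Ferguson's conclusion that $g$ has at most one \emph{real} root on each circle does not rule out non-real roots of $g$ of modulus $\gamma$ either.

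The missing idea is a counting argument that uses the definition of $m$:
\begin{itemize}
\item Weak Perron: if $g(\delta)=0$ and $\mu^m=\delta$, then $\m_\alpha(\mu)=0$. So $|\delta|=|\mu|^m\le\alpha^m=\gamma$.
\item Strict dominance: the $m$ distinct numbers $\alpha\zeta$ with $\zeta^m=1$ satisfy $\m_\alpha(\alpha\zeta)=g(\alpha^m)=0$ and lie on $|z|=\alpha$. Since exactly $m$ roots of $\m_\alpha$ lie on that circle, these are \emph{all} of them. Hence if $|\delta|=\gamma$, any $m$-th root $\mu$ of $\delta$ equals some $\alpha\zeta$, and $\delta=\mu^m=\gamma$.
\end{itemize}
This is how the paper argues, and without it the reduction is unjustified. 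Your argument that $\gamma$ has no other positive conjugate is correct. That is the only place the unique-positive-conjugate hypothesis enters.

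\textbf{Smaller points.} The rest of your hedging is unnecessary.
\begin{itemize}
\item No reversal or Gauss's lemma is needed. Just evaluate $F(\gamma^{-1})=1$ and multiply by $\gamma^N$ to get $\gamma^N=\sum_{i=1}^N c_i\gamma^{N-i}$.
\item The negative-tail polynomial is $X^N-\sum_{i} c_iX^{N-i}$, not $X^N-X^{N-1}\tilde f(X)$.
\item From this relation, induction gives $\N_0[\gamma]=\langle 1,\gamma,\dots,\gamma^{N-1}\rangle$. So $\N_0[\gamma]$ is finitely generated outright, with no separate atomicity check. Atomicity is in any case immediate from $\alpha>1$, since every nonzero element of $\Na$ is then at least $1$.
\item Substituting $\gamma=\alpha^m$ gives $\alpha^{mN}\in\langle 1,\alpha,\dots,\alpha^{mN-1}\rangle$. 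So $\Na$ is finitely generated directly, and you can bypass Proposition~\ref{x_k_atoms} entirely.
\end{itemize}
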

\begin{proof}
    We begin with the forward direction. Since $\Na$ is an FGM, then there exists $n \in \N$ such that $\alpha^n = \sum\limits_{i=0}^{n-1}c_i \alpha^i$ where $c_i \in \N_0 $. Define $Q(t) = t^n - \sum\limits_{i=0}^{n-1}c_i t^i$. 
    Suppose $\alpha$ is not a weak Perron number, then there exists a conjugate $\beta$ of $\alpha$ such that $|\beta| > \alpha$. Since $Q(\alpha)=0$ we must also have $Q(\beta)=0$. Therefore, $\beta^n=\sum\limits_{i=0}^{n-1}c_i \beta^i$ and by the triangle inequality we get $|\beta|^n \leq \sum\limits_{i=0}^{n-1}c_i |\beta|^i$. $Q(X)$ is ultimately positive and it has only one sign change, thus, by Theorem~\ref{descarte}, it has at most one positive root, which must be $\alpha$. Since $|\beta| > \alpha$ then $Q(|\beta|)>0$ and therefore we get $|\beta|^n > \sum\limits_{i=0}^{n-1}c_i |\beta|^i$ which is a contradiction. \\

    For the reverse direction, assume that $\alpha$ is an algebraic integer, that $\alpha$ is weak Perron, and that $\alpha$ is the unique positive conjugate of its minimal polynomial. For the case $\alpha=1$, $\Na = \N_0$ which is finitely generated. We assume $\alpha \neq 1$.

    We show that $\alpha>1$. Suppose, by contradiction, that $0<\alpha<1$. Let $\alpha=\alpha_1,\alpha_2,\dots,\alpha_d$ be the conjugates of $\alpha$. Since $\alpha$ is weak Perron, we have $|\alpha_i|\leq \alpha<1$ for every $i\in\{1,\dots,d\}$. Hence $|m_\alpha(0)|=\prod_{i=1}^d |\alpha_i| <1$ which is a contradiction since $\alpha$ is an algebraic integer. It follows that $\alpha>1$.
    
    Let $k$ be the number of roots of $m_\alpha(X)$ on the circle $|z|=\alpha$. Since $\alpha$ itself is such a root, we have $k\geq 1$. By Theorem~\ref{ferguson}, there exists a polynomial $g(X)\in \mathbb Z[X]$ such that $m_\alpha(X)=g(X^k)$. Moreover, since $m_\alpha(X)$ is irreducible over $\mathbb Q$, the polynomial $g$ is also irreducible over $\mathbb Q$. Set $\lambda=\alpha^k$. Then $g(\lambda)=0$, and since $\alpha$ is an algebraic integer, so is $\lambda$.

    We claim that $\lambda$ is a Perron number. Let $\delta$ be a conjugate of $\lambda$, equivalently a root of $g$. If $\eta^k=\delta$, then $m_\alpha(\eta)=g(\eta^k)=g(\delta)=0$. Thus $\eta$ is a conjugate of $\alpha$. Since $\alpha$ is weak Perron, we have $|\eta|\leq \alpha$. Therefore $|\delta|=|\eta|^k\leq \alpha^k=\lambda$. Hence $\lambda$ is weak Perron.

    It remains to show that no conjugate of $\lambda$ distinct from $\lambda$ has modulus $\lambda$. Suppose that $\delta$ is a root of $g$ with $|\delta|=\lambda$. Let $\omega_0=1,\omega_1,\dots,\omega_{k-1}$ be the $k$-th distinct roots of unity. We have $m_\alpha(\omega_i\alpha)=g(\alpha^k)=g(\lambda)=0$. Therefore, the $k$-th roots of $m_\alpha$ that lie on the circle $|z|=\alpha$ are $\alpha \omega_0, \alpha \omega_1 \dots,\alpha \omega_{k-1}$. Let $\mu \in \C$ with $\mu ^ k = \delta$. We have $g(\delta)=0$ implies that $g(\mu^k)=0$ implies that $m_\alpha(\mu)=0$ with $|\mu|=\sqrt[k]{|\delta|} = \sqrt[k]{\lambda}=\alpha$. Therefore $\mu = \omega_i \alpha$ for a certain $i$. This implies $\delta=\mu^k=\lambda$. So $\lambda$ is a Perron number.

    We now show that $\lambda$ is the unique positive conjugate of its minimal polynomial. Let $\delta>0$ be a positive root of $g$. Then $\delta^{1/k}>0$ is a positive root of $f$, because $f(\delta^{1/k})=g(\delta)=0$. Since $\alpha$ is the unique positive root of $f$, we get $\delta^{1/k}=\alpha$, and therefore $\delta=\alpha^k=\lambda$. Hence $\lambda$ has no positive conjugate aside from itself.

    If $k \neq 1$, set $\gamma=\lambda^{-1}$. Then $\gamma\in(0,1)$, and $\gamma^{-1}=\lambda$ is a Perron number with no positive conjugate aside from itself. By Proposition~\ref{simple-polynomial}, there exists a polynomial $h(X)\in\mathbb Z[X]$ such that $h(X)m_\gamma(X)\in X\mathbb N_0[X]-1$. Therefore there exists a polynomial $F(X)\in X\mathbb N_0[X]$ such that $h(X)m_\gamma(X)=F(X)-1$. Evaluating at $\gamma$ gives $F(\gamma)=1$. Write $F(X)=\sum_{i=1}^{N}c_iX^i$ where $c_1,\dots,c_N\in\mathbb N_0$. Then $1=\sum_{i=1}^{N}c_i\gamma^i =\sum_{i=1}^{N}c_i\lambda^{-i}$. Multiplying by $\lambda^N$ gives $\lambda^N=\sum_{i=1}^{N}c_i\lambda^{N-i}$. Which implies that $\mathbb N_0[\lambda]$ is finitely generated by
Proposition~\ref{generation}.
    
    By Proposition~\ref{x_k_atoms}, applied to $\lambda$ and to the root $\alpha$ of $g(X^k)$, gives $\left|\mathcal A(\mathbb N_0[\alpha])\right| = k\left|\mathcal A(\mathbb N_0[\lambda])\right|$. Thus $\mathcal A(\mathbb N_0[\alpha])$ is finite and nonempty. By Proposition~\ref{generation}, $\mathbb N_0[\alpha]$ is therefore atomic and finitely generated.

    If $k=1$, set $\gamma=\alpha^{-1}$ and the above argument follows.
\end{proof}

\begin{remark}
The hypotheses in the previous reverse direction cannot be weakened in an obvious way. The condition that $\alpha$ be an algebraic integer cannot be omitted. Indeed, take $\alpha=\frac{3}{2}$. The condition that $\alpha$ be the unique positive conjugate cannot be omitted as well. Take $\alpha=2+\sqrt{2}$.
\end{remark}

\section{Application to Degree 3 Polynomials}\label{deg3}

As before, let $\alpha$ be an algebraic positive real number over $\Q$. Let $\m_{\alpha}$ be the primitive polynomials of minimal degree over $\Z$ with $\m_{\alpha}(\alpha)=0$. In this section, we will restrict our attention to the case $\deg  \m_\alpha = 3$. Let $p_{\alpha}(X),\ q_\alpha(X)\in\N_0[X]$ such that $m_{\alpha}(X)=p_{\alpha}(X)-q_{\alpha}(X)$.

In the case when $\m_\alpha$ is not monic or when $\alpha$ is not the unique positive root, by Proposition \ref{onepositiveroot} $\Na$ is infinitely generated or antimatter. For the rest of this section, we suppose that $\m_\alpha$ is monic with $\alpha$ its unique positive root. 

Also note that if $\alpha < 1$ then for any $n \in \N$, $\alpha^n$ cannot be written as an $\N_0$-linear combination of $\{1, \alpha, \dots, \alpha ^ {n-1}\}$. In view of Theorem \ref{generation}, $\Na$ must be infinitely generated or antimatter. For the rest of this section, we will assume that $\alpha > 1$.

Write $\m_\alpha(X) = X^3 \pm aX^2 \pm bX \pm c$ where $a,b,c \in \N_0$. Note that $c \neq 0$ since $\m_\alpha$ is irreducible. Since $\m_\alpha(X)$ has a unique positive root and it is eventually positive, then $\m_\alpha(0) < 0$ which forces $\m_\alpha$ to have the form $\m_\alpha(x) = X^3 \pm aX^2 \pm bX - c$. We assume that $a,b,c \neq 0$ and we treat the cases where they are zero separately. At the end of this section, there is a summary of the classification.

\subsection{The form \texorpdfstring{$\m_\alpha(X) = X^3+aX^2-bX-c$}{mα(X) = X³+aX²-bX-c}.}

For $\m_\alpha(X) = X^3+aX^2-bX-c$, if $\alpha$ is not a weak Perron number, then $\Na$ is infinitely generated or antimatter by Theorem \ref{perron}. If $\Na$ is an FGM, we have the following necessary conditions on the coefficients of $\m_\alpha(X)$:
\begin{proposition}\label{coef}
    Suppose $\Na$ is an FGM and $\m_\alpha(X) = X^3+aX^2-bX-c$ then $b\geq a^2$ and $b^3 \geq a^3c$    
\end{proposition}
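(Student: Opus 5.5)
The plan is to use the forward direction of Theorem~\ref{perron}. Since $\Na$ is an FGM, $\alpha$ is weak Perron. Let $\beta,\gamma$ denote the other two roots of $\m_\alpha(X)=X^3+aX^2-bX-c$, so that $|\beta|,|\gamma|\le\alpha$. Both inequalities should then follow from Vieta's formulas together with these modulus bounds, so everything is expressed through the symmetric functions of $\beta,\gamma$.

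\textbf{Step 1: Vieta.} Set $s=\beta+\gamma$ and $\pi=\beta\gamma$; both are real, since either $\beta,\gamma$ are real or they are complex conjugates. Vieta's formulas give
\[
\alpha+s=-a,\qquad \alpha s+\pi=-b,\qquad \alpha\pi=c .
\]
Hence $s=-(a+\alpha)$, $\pi=c/\alpha>0$, and $b=\alpha(a+\alpha)-\pi$.

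\textbf{Step 2: the two bounds.} The weak Perron property gives $\pi=|\beta\gamma|\le\alpha^2$. It also gives $|s|\le|\beta|+|\gamma|\le 2\alpha$, so $a+\alpha\le 2\alpha$, that is, $a\le\alpha$. (The same bounds are clear case by case: in the real case $\pi>0$ and $s<0$ force $\beta,\gamma<0$, while in the complex case $|\beta|=|\gamma|=\sqrt{\pi}$.)

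\textbf{Step 3: conclude.} From Step 1 and $\pi\le\alpha^2$ we get $b=\alpha(a+\alpha)-\pi\ge a\alpha$. Combined with $a\le\alpha$ this yields $b\ge a\alpha\ge a^2$. For the second inequality, $c=\alpha\pi\le\alpha^3$, so
\[
b^3\ge a^3\alpha^3\ge a^3c .
\]
This uses $a\ge 0$, so cubing $b\ge a\alpha\ge 0$ is legitimate.

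There is no real obstacle here. The one point needing care is recognizing that the right quantities to bound are $s$ and $\pi$, rather than working with the roots directly, and checking that both are real so that the inequalities make sense. Beyond that, the only input is the weak Perron conclusion of Theorem~\ref{perron}.
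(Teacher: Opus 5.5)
Your proof is correct and follows the paper's argument essentially step for step. You obtain weak Perron from Theorem~\ref{perron} and then use Vieta to get $c\le\alpha^3$, $b\ge a\alpha$ (equivalently $c=\alpha^3+a\alpha^2-b\alpha\le\alpha^3$) and $a\le\alpha$ (from $a+\alpha=-(\beta+\gamma)\le 2\alpha$), combining them exactly as the paper does; your form $b\ge a\alpha$, rather than the paper's $b/a\ge\alpha$, has the small advantage of not dividing by $a$.
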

\begin{proof}
    Let $\beta$ and $\gamma$ be the conjugates of $\alpha$ over $\Q$. Since $\alpha$ is a weak Perron number, we have $|\beta| \leq \alpha$ and $|\gamma| \leq \alpha$. \\
    
    By Viète's formulas we have $\alpha\beta\gamma=c$ which implies $c\leq \alpha^3$. We also have $\alpha^3+a\alpha^2-b\alpha = c \leq \alpha^3$ which implies $\frac{b}{a} \geq \alpha$. Combining both inequalities we get $b^3 \geq a^3c$. For the other inequality, we have by Viète's equations $\beta + \gamma + \alpha = -a$ which implies $a + \alpha \leq -\beta - \gamma \leq 2\alpha$. Combining with $\alpha \leq \frac{b}{a}$, the inequality follows. \\
\end{proof}

The converse of the above proposition is not true in general, even if $\Na$ is atomic. In the next example, we will prove that the above conditions do not imply weak perron.

\begin{example}
    Let $\m_\alpha(X)=X^3+X^2-bX-2$ where $b\geq 6$ is an integer. By checking the divisors of $2$, we can see that $\m_\alpha(X)$ has no rational root, and hence is irreducible. We have $\m_\alpha(\sqrt{b}).\m_\alpha(0)<0$, thus $\alpha$ is between $0$ and $\sqrt{b}$. Note that $\alpha$ is the unique positive root for $\m_\alpha$ since by Theorem \ref{descarte} it has $1$.
    We also have $\m_\alpha(-\sqrt{b})>0$ with $\m_\alpha$ of odd degree, so it must have a root $\beta<-\sqrt{b}$. Thus we have $|\beta|>\sqrt{b}>\alpha$, and therefore $\alpha$ is not a weak Perron number. Since $|\m_\alpha(0)|\neq 1$, then $\Na$ is atomic by proposition \ref{atomicity}. In view of theorem \ref{perron}, $\Na$ is infinitely generated.
\end{example}

\begin{proposition}\label{notlfm}
    If $\Na$ is atomic and $\m_\alpha(X) = X^3+aX^2-bX-c$ then $\Na$ is not an LFM.
\end{proposition}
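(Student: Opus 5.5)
The plan is to split according to the two ways an atomic $\Na$ can be an LFM: it is either a UFM or a proper LFM. I will rule out each case using the characterizations recalled in the Preliminaries. Throughout, as stated at the beginning of this section, $a,b,c\in\N$ are nonzero, so in particular $a\geq 1$.

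\emph{Step 1 (not a UFM).} The minimal pair of $\m_\alpha(X)=X^3+aX^2-bX-c$ is $p_\alpha(X)=X^3+aX^2$ and $q_\alpha(X)=bX+c$. Since $a\ge 1$, $p_\alpha$ is not a monomial $X^d$. By Proposition~\ref{ufm}, (a)$\Leftrightarrow$(d), the monoid $\Na$ is therefore not a UFM.

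\emph{Step 2 (not a proper LFM).} Suppose, for contradiction, that $\Na$ is a proper LFM. By Theorem~\ref{lfm}, $\mathcal A(\Na)=\{1,\alpha,\alpha^2,\alpha^3\}$. This set is finite, so $\Na$ is finitely generated, and Proposition~\ref{generation} gives $\sigma=4$. Hence
\[
\alpha^4=c_3\alpha^3+c_2\alpha^2+c_1\alpha+c_0 \quad\text{with } c_i\in\N_0 .
\]
Consequently the negative-tail polynomial $H(X)=X^4-c_3X^3-c_2X^2-c_1X-c_0$ vanishes at $\alpha$, and so $\m_\alpha$ divides $H$ in $\Q[X]$. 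Both polynomials are monic, so Gauss's lemma gives $H(X)=(X+r)\m_\alpha(X)$ with $r\in\Z$, exactly as in the proof of Theorem~\ref{pminusc}.

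Expanding the product gives
\[
(X+r)\m_\alpha(X)=X^4+(a+r)X^3+(ra-b)X^2-(rb+c)X-rc .
\]
Every non-leading coefficient of $H$ must be $\le 0$. The constant term gives $-rc\le 0$, hence $r\ge 0$ since $c>0$. The $X^3$ coefficient gives $a+r\le 0$, hence $r\le -a\le -1$. These two conditions contradict each other, so $\Na$ is not a proper LFM.

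Combining Steps 1 and 2, $\Na$ is neither a UFM nor a proper LFM, and hence it is not an LFM. The only delicate point is justifying that the degree-$4$ relation forces an integral monic linear cofactor. This is the same Gauss-lemma argument already used in Theorem~\ref{pminusc}, so I expect no real obstacle; the remaining coefficient comparison is immediate.
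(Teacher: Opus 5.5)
Your proof is correct and follows essentially the same route as the paper: you rule out a UFM via Proposition~\ref{ufm}, then use Theorem~\ref{lfm} to obtain a degree-$4$ negative-tail multiple $(X+r)\m_\alpha(X)$, and the constant and $X^3$ coefficients give contradictory sign conditions on $r$. Your non-strict inequalities $r\ge 0$ and $r\le -a\le -1$ are the accurate form of the paper's stated $d>0$ and $d<-a$, and you correctly note that $a\ge 1$ is what makes them incompatible.
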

\begin{proof}
    By Proposition \ref{ufm}, $\Na$ is not a UFM. Suppose it is a proper LFM, by Proposition \ref{lfm} with Proposition \ref{generation} there exists a polynomial $H(X) = X^4 - \sum \limits_{i=0}^{3} c_i X^i$ where $c_i$ are all nonnegative having $\alpha$ as a root. As such, it can be written as $H(X) = \m_\alpha(X)(X+d)$ with $d$ being a nonzero integer. After developing, considering the coefficient degree $0$, we get $d > 0$. The coefficient of degree $3$ implies that $d < -a$ which is a contradiction. 
\end{proof}

The next example illustrates the case when $\m_\alpha$ satisfies the necessary conditions of proposition \ref{coef}, and the size of the set of atoms is $5$.

\begin{example}
    Let $\m_\alpha(X)=X^3+X^2-pX-2p$ where $p$ is a prime number strictly greater than $3$. By the rational root test, the only possible rational roots are $\pm1,\pm2,\pm p,\pm2p$, and none of them is a root, and hence is irreducible. We have $\m_\alpha(1)<0$, and $\m_\alpha$ is ultimately positive, therefore it has a positive root $\alpha>1$. Note that $\alpha$ is the unique positive root for $\m_\alpha$ since by Theorem \ref{descarte} it can have at most $1$. Since $|\m_\alpha(0)|\neq 1$, then $\Na$ is atomic by Proposition~\ref{atomicity}. In view of Proposition~\ref{notlfm}, $\Na$ is not an $LFM$, thus $1,\ \alpha,\ \alpha^2,\ \alpha^3,\ \alpha^4$ are atoms. We will prove that $ \alpha^5$ is not an atom. Indeed, $\alpha$ is a root of $X^5+(1-p)X^3+(2-p)X^2-4p$. 
\end{example}

\subsection{The form \texorpdfstring{$\m_\alpha(X) = X^3-aX^2+bX-c$}{mα(X) = X³-aX²+bX-c}.}

Suppose $\m_\alpha(X) = X^3-aX^2+bX-c$, with a positive root $\alpha$ such that $\Na$ is atomic.
Since $\m_\alpha(-x)$ has no sign changes, by theorem \ref{descarte}, $\m_\alpha$ has no negative roots. If it has three positive roots, then by Proposition~\ref{onepositiveroot} it cannot be finitely generated. For the rest of this subsection, we suppose that $\alpha$ is the unique positive root of $\m_\alpha$. Denote $\beta$ and $\overline{\beta}$ the conjugates of $\alpha$.\\

We have the following lemma:

\begin{lemma}\label{coefpmpm}
    The root $\alpha$ is a weak perron if and only if $b^3\leq a^3c$.    
\end{lemma}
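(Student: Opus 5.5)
The plan is to reduce weak Perron-ness to a single inequality on $\alpha$ and then test that inequality by evaluating $\m_\alpha$ at the point $b/a$. Throughout, $a,b,c>0$, $\alpha$ is the unique positive root, and $\m_\alpha$ has no negative roots by the Descartes argument above. Hence the other two roots $\beta,\overline{\beta}$ are non-real complex conjugates. The only conjugate whose modulus needs checking is $\beta$, since $|\overline{\beta}|=|\beta|$.

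First I would express $|\beta|$ through $\alpha$. By Viète's formulas, $\alpha\beta\overline{\beta}=c$, so $|\beta|^2=c/\alpha$. Therefore $\alpha$ is weak Perron if and only if $c/\alpha\le \alpha^2$, that is, $c\le \alpha^3$. Next I would use $\m_\alpha(\alpha)=0$ in the form $\alpha^3-c = a\alpha^2-b\alpha = \alpha(a\alpha-b)$. Since $\alpha>0$ and $a>0$, this gives
\[
c\le\alpha^3 \iff a\alpha\ge b \iff \alpha\ge \tfrac{b}{a}.
\]

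Then I would translate $\alpha\ge b/a$ into a sign condition on $\m_\alpha$. We have $\m_\alpha(0)=-c<0$, $\m_\alpha$ is eventually positive, and $\alpha$ is its unique positive root, which is simple by separability. Hence $\m_\alpha<0$ on $[0,\alpha)$ and $\m_\alpha>0$ on $(\alpha,\infty)$. So for the positive number $b/a$, we get $\alpha\ge b/a$ if and only if $\m_\alpha(b/a)\le 0$. A direct computation gives
\[
\m_\alpha\!\left(\tfrac{b}{a}\right)=\tfrac{b^3}{a^3}-\tfrac{b^2}{a}+\tfrac{b^2}{a}-c=\tfrac{b^3}{a^3}-c.
\]
Its sign is that of $b^3-a^3c$, which yields the claimed equivalence.

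There is no real obstacle here; the step that needs care is the sign analysis of $\m_\alpha$ on $(0,\infty)$. It relies on the unique-positive-root hypothesis and on the simplicity of the root, so that $\m_\alpha$ genuinely changes sign at $\alpha$. I would also note where $a\neq 0$ is used: in dividing by $a$ and in the equivalence $a\alpha\ge b\iff \alpha\ge b/a$.
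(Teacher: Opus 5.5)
Your proof is correct and is essentially the paper's argument: Vi\`ete's formulas turn weak Perron into $c\le\alpha^3$, and the sign of $\m_\alpha$ at a test point in $(0,\infty)$ then decides the inequality. The only difference is the test point. The paper evaluates at $\sqrt[3]{c}$, where $\m_\alpha(\sqrt[3]{c})=\sqrt[3]{c}\,(b-a\sqrt[3]{c})$, while you first use $\m_\alpha(\alpha)=0$ to rewrite $c\le\alpha^3$ as $\alpha\ge b/a$ and evaluate at $b/a$. Your explicit justification that $|\beta|^2=c/\alpha$, because the other two roots are non-real conjugates, fills in a step the paper leaves implicit.
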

\begin{proof}
    By Viète's equations, we have $c=\alpha\beta\overline{\beta}$. We deduce the following: \begin{align*}
        \alpha\ \text{is a weak perron} &\Leftrightarrow c\leq \alpha^3\\
        &\Leftrightarrow \sqrt[3]{c}\leq \alpha\\
        &\Leftrightarrow \m_\alpha(\sqrt[3]{c})\leq 0\ \text{by continuity of }\m_\alpha\\
        &\Leftrightarrow b^3\leq a^3c.
    \end{align*}
\end{proof}

\begin{corollary}\label{infgen-cubic}
Assume that $\m_\alpha(X)=X^3-aX^2+bX-c$ and that $\alpha$ is the unique positive root of $\m_\alpha$. Then $\Na$ is infinitely generated if and only if $b^3>a^3c$.

\end{corollary}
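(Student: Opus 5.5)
The corollary is an equivalence, and we prove it by combining Lemma~\ref{coefpmpm} with the two directions of Theorem~\ref{perron}. Throughout, the standing assumptions of this section hold: $\m_\alpha$ is monic, $\alpha>1$ is its unique positive root, and $\Na$ is atomic. In particular $\alpha$ is an algebraic integer.

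\emph{($\Leftarrow$).} Suppose $b^3>a^3c$. By Lemma~\ref{coefpmpm}, $\alpha$ is not a weak Perron number. The contrapositive of the forward direction of Theorem~\ref{perron} then shows that $\Na$ is not finitely generated. Since $\Na$ is atomic (it is not antimatter), it is infinitely generated.

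\emph{($\Rightarrow$).} We argue by contraposition. Suppose $b^3\le a^3c$. By Lemma~\ref{coefpmpm}, $\alpha$ is weak Perron. Because $\m_\alpha$ is monic with integer coefficients, $\alpha$ is an algebraic integer. By assumption, $\alpha$ is the unique positive conjugate of its minimal polynomial. All three hypotheses of the converse part of Theorem~\ref{perron} are therefore met, so $\Na$ is finitely generated. Hence it is not infinitely generated.

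The only delicate point is checking that Lemma~\ref{coefpmpm} genuinely applies, i.e.\ that the chain of equivalences in its proof is justified. Two facts are needed.

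\begin{itemize}
\item[(i)] We need $\alpha$ weak Perron $\iff \alpha^3\ge c=\alpha|\beta|^2$. The conjugates are $\beta,\overline\beta$, either a complex-conjugate pair or real. If real, Descartes' rule (there are no negative roots) together with the uniqueness of the positive root would force them to be positive, which is excluded. So they are a non-real pair with equal moduli, and therefore $|\beta|\le\alpha \iff |\beta|^2\le\alpha^2 \iff c\le \alpha^3$.
\item[(ii)] We need $\sqrt[3]{c}\le\alpha \iff \m_\alpha(\sqrt[3]{c})\le 0$. Since $\m_\alpha(0)=-c<0$, $\alpha$ is the only positive root, and $\m_\alpha$ is eventually positive, the polynomial $\m_\alpha$ is negative on $(0,\alpha)$ and positive on $(\alpha,\infty)$. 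Evaluating at $x=\sqrt[3]{c}$ gives $\m_\alpha(x)=x(b-ax)$, so $\m_\alpha(x)\le 0 \iff b\le a\sqrt[3]{c} \iff b^3\le a^3c$.
\end{itemize}

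With these two checks, the lemma and hence the corollary follow.
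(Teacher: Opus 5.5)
Your proposal is correct and follows the paper's proof exactly: Lemma~\ref{coefpmpm} turns $b^3>a^3c$ (resp.\ $b^3\le a^3c$) into $\alpha$ failing to be (resp.\ being) weak Perron, and the forward (resp.\ converse) direction of Theorem~\ref{perron} then gives infinite (resp.\ finite) generation. Your extra checks of the lemma are sound and fill in details the paper leaves implicit: that $\beta,\overline{\beta}$ must be non-real, so $c=\alpha|\beta|^2$; and that $\m_\alpha<0$ on $(0,\alpha)$ together with $\m_\alpha(\sqrt[3]{c})=\sqrt[3]{c}\,(b-a\sqrt[3]{c})$.
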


\begin{proof}
First suppose that $b^3>a^3c$. Then $\alpha$ is not weak Perron, by Lemma~\ref{coefpmpm}. By Theorem~\ref{perron}, $\Na$ is infinitely generated.

Conversely, suppose that $b^3\leq a^3c$. Then Lemma~\ref{coefpmpm} implies that $\alpha$ is weak Perron. Since $\alpha$ is an algebraic integer and is the unique positive conjugate of its minimal polynomial, then the converse implication of Theorem~\ref{perron}
applies, and so $\Na$ is finitely generated. 
\end{proof}

We deduce a family of infinitely generated monoids $\N_0[\alpha_p]$ of rank $3$, where $\alpha_p$ is a positive root of $X^3-X^2+pX-2p^2$, where $p>2$ is a prime. 

\begin{proposition}\label{properlfm}
    $\Na$ is a proper LFM if and only if $b\leq a^2$ and $\frac{b}{a}\leq \lfloor\frac{c}{b}\rfloor$.
\end{proposition}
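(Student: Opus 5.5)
The plan is to use Theorem~\ref{lfm} together with Proposition~\ref{generation}. Since $p_\alpha\neq X^3$, Proposition~\ref{ufm} says $\Na$ is not a UFM. So $\Na$ is a proper LFM exactly when the atoms are $1,\alpha,\alpha^2,\alpha^3$, that is, when $\sigma=4$. Because $\alpha^3$ is an atom and $\Na$ is atomic (here $c\ge 2$, or we are in the stated atomic setting), $\sigma=4$ holds if and only if $\alpha^4\in\langle 1,\alpha,\alpha^2,\alpha^3\rangle$.

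Such a relation gives a negative-tail polynomial $H(X)=X^4-\sum_{i=0}^3 c_iX^i$ with $H(\alpha)=0$, so $\m_\alpha\mid H$. Since $\deg\m_\alpha=3$ and $\m_\alpha$ is monic, we can write $H(X)=\m_\alpha(X)(X+d)$ with $d\in\Z$. Conversely, any $d\in\Z$ for which this product is negative-tail gives $\alpha^4$ as an $\N_0$-combination of lower powers. So the problem reduces to the existence of an integer $d$ with all non-leading coefficients of $\m_\alpha(X)(X+d)$ non-positive.

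Expanding gives
\[
(X^3-aX^2+bX-c)(X+d)=X^4+(d-a)X^3+(b-ad)X^2+(bd-c)X-cd.
\]
The conditions are:
\begin{itemize}
\item[(i)] $d\le a$ from the $X^3$ coefficient;
\item[(ii)] $ad\ge b$, i.e. $d\ge b/a$, from the $X^2$ coefficient;
\item[(iii)] $bd\le c$, i.e. $d\le \lfloor c/b\rfloor$, from the $X$ coefficient;
\item[(iv)] $cd\ge 0$ from the constant term, which is automatic once (ii) forces $d>0$.
\end{itemize}
So we need an integer $d$ with $\lceil b/a\rceil\le d\le \min(a,\lfloor c/b\rfloor)$. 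This is nonempty exactly when $\lceil b/a\rceil\le a$ and $\lceil b/a\rceil\le\lfloor c/b\rfloor$. The first is equivalent to $b\le a^2$. The second is equivalent to $b/a\le\lfloor c/b\rfloor$, because the right side is an integer. Reading the chain backwards gives the converse: choose $d=\lceil b/a\rceil$, so that $\alpha^4=(a-d)\alpha^3+(ad-b)\alpha^2+(c-bd)\alpha+cd$ with nonnegative coefficients. Hence $\sigma\le 4$, and since $\sigma\ge 4$ we get $\sigma=4$ and a proper LFM.

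The main obstacle is justifying that $\sigma\ge 4$, i.e. that $1,\alpha,\alpha^2,\alpha^3$ really are atoms; otherwise Theorem~\ref{lfm} does not apply. I would argue as the paper does: $\sigma\ge\deg\m_\alpha=3$, and $\sigma=3$ would force $\m_\alpha$ itself to be negative-tail, which fails because $b>0$. A smaller point is atomicity. If $c=1$, then $|\m_\alpha(0)|=1$ and Proposition~\ref{atomicity} does not directly apply, so I would use the standing assumption that $\Na$ is atomic. Alternatively, the formula forces $c\ge b\,\lceil b/a\rceil\ge b\ge 1$, and the degenerate cases can be checked separately.
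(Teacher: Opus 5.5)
Your proof is correct and follows the paper's argument: you reduce to the existence of an integer $d$ for which $\m_\alpha(X)(X+d)$ is a negative-tail polynomial, and then solve the same four linear inequalities in $d$. Your version also spells out what the paper leaves implicit, namely why being a proper LFM is equivalent to $\alpha^4\in\langle 1,\alpha,\alpha^2,\alpha^3\rangle$ (via Theorem~\ref{lfm} and $\sigma\ge 4$), and the ceiling/floor computation showing the system is solvable exactly when $b\le a^2$ and $b/a\le\lfloor c/b\rfloor$.
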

\begin{proof}
    First note that $\Na$ is not a $UFM$ by proposition \ref{ufm}. Let $Q(X)=X+d$ where $d\in\Z$. For $\m_\alpha Q$ to be a negative tail polynomial it is necessary and sufficient that the following system admits an integer solution for $d$:
    \begin{align*}
        -cd\leq 0,\ bd-c\leq 0,\ -ad+b\leq 0,\ -a+d\leq 0,
    \end{align*}
    this system is solvable if and only if $b\leq a^2$ and $\frac{b}{a}\leq \lfloor\frac{c}{b}\rfloor$.
\end{proof}

We note that in the case when $b>a^2$, one can prove that $\left|\mathcal{A}(\Na)\right|\geq 6$. The next two examples illustrate cases where $\left|\mathcal{A}(\Na)\right|= 5$ and $\left|\mathcal{A}(\Na)\right|= 7$.

\begin{example}
    In this example $\alpha$ is a root of $\m_\alpha(X)=X^3-3X^2+5X-8$ and $\left|\mathcal{A}(\Na)\right|= 5$. 
\end{example}

\begin{example}
    In this example $\alpha$ is a root of $\m_\alpha(X)=X^3-2X^2+5X-20$ and $\left|\mathcal{A}(\Na)\right|= 7$.
\end{example}

\begin{table}[ht]
\centering
\caption{Cubic sign patterns and the corresponding generation behavior of $\Na$.}
\label{tab:cubic-classification}
\renewcommand{\arraystretch}{1.25}
\begin{tabular}{@{}
>{\centering\arraybackslash}p{0.26\textwidth}
>{\raggedright\arraybackslash}p{0.5\textwidth}
>{\raggedright\arraybackslash}p{0.20\textwidth}
@{}}
\toprule
Form of $\m_\alpha(X)$ & Conclusion for $\Na$ & Reference \\
\midrule

$X^3-c$ 
& 
UFM whenever $X^3-c$ is irreducible. 
& 
Proposition~\ref{ufm}

\\

$X^3+bX-c$
&
Antimatter if $c=1$; infinitely generated if $c>1$.
&
Theorem~\ref{pminusc}
\\

$X^3-bX-c$
&
UFM.
&
Proposition~\ref{ufm}
\\

$X^3+aX^2-c$
&
Antimatter if $c=1$; infinitely generated if $c>1$.
&
Theorem~\ref{pminusc}
\\

$X^3-aX^2-c$
&
UFM.
&
Proposition~\ref{ufm}
\\

$X^3+aX^2+bX-c$
&
Antimatter if $c=1$; infinitely generated if $c>1$.
&
Theorem~\ref{pminusc}
\\

$X^3-aX^2-bX-c$
&
UFM.
&
Proposition~\ref{ufm}
\\

\midrule

\multirow{2}{*}{$X^3+aX^2-bX-c$}
&
If $\Na$ is an FGM, then $b\ge a^2$ and $b^3\ge a^3c$.
&
Proposition~\ref{coef}
\\
\cmidrule(l){2-3}
&
If $\Na$ is atomic, then $\Na$ is never an LFM.
&
Proposition~\ref{notlfm}
\\

\midrule

\multirow{2}{*}{$X^3-aX^2+bX-c$}
&
$\Na$ is a proper LFM if and only if $b\le a^2$ and $\frac{b}{a}\le \left\lfloor \frac{c}{b}\right\rfloor$.

&
Proposition~\ref{properlfm}
\\
\cmidrule(l){2-3}
&
$b^3 \leq a^3c$ if and only if $\Na$ is finitely generated.
&
Corollary~\ref{infgen-cubic}
\\

\bottomrule
\end{tabular}
\end{table}
\newpage
\section*{Acknowledgements}

The authors are grateful to the anonymous referee for a careful reading of the manuscript and for several valuable comments. In particular, the referee encouraged us to reconsider the converse direction relating weak Perron numbers to finite generation.


\begin{thebibliography}{1}

\bibitem{ajran2023factorization}
K.~Ajran, J.~Bringas, B.~Li, E.~Singer, and M.~Tirador,
\newblock ``Factorization in additive monoids of evaluation polynomial semirings,''
\newblock {\em Communications in Algebra}, vol. 51, no. 10, pp. 4347--4362, 2023, Taylor \& Francis.

\bibitem{Bourne1951}
S.~Bourne,
\newblock \emph{The Jacobson radical of a semiring},
\newblock Proc. Natl. Acad. Sci. USA \textbf{37} (1951), no.~3, 163--170.
\newblock \href{https://doi.org/10.1073/pnas.37.3.163}{doi:10.1073/pnas.37.3.163}.

\bibitem{chapman2026betti}
S.~T.~Chapman, P.~García-Sánchez, C.~O'Neill, and V.~Ponomarenko,
\newblock ``Betti elements and full atomic support in rings and monoids,''
\newblock {\em Journal of Algebra}, Elsevier, 2026.

\bibitem{CMG}
J.~Correa-Morris and F.~Gotti,
\newblock \emph{On the additive structure of algebraic valuations of polynomial semirings},
\newblock Journal of Pure and Applied Algebra \textbf{226} (2022), no.~11, 107104.
\newblock \href{https://doi.org/10.1016/j.jpaa.2022.107104}{doi:10.1016/j.jpaa.2022.107104}.

\bibitem{curtiss1918recent}
D.~R.~Curtiss,
\textit{Recent Extensions of Descartes' Rule of Signs},
Annals of Mathematics,
\textbf{19} (1918), no. 4, 251--278.

\bibitem{dani2025set}
J.~Dani, A.~Deng, M.~Gotti, B.~Li, A.~Paladiya, J.~Vulakh, and J.~Zeng,
\newblock ``On the set of atoms and strong atoms in additive monoids of cyclic semidomains,''
\newblock {\em arXiv preprint arXiv:2508.11319}, 2025.

\bibitem{GHK2006}
A.~Geroldinger and F.~Halter-Koch,
\newblock \emph{Non-Unique Factorizations: Algebraic, Combinatorial and Analytic Theory},
\newblock Chapman \& Hall/CRC, 2006.
\newblock \href{https://doi.org/10.1201/9781420003208}{doi:10.1201/9781420003208}.

\bibitem{Golan1999}
J.~S.~Golan,
\newblock \emph{Semirings and their Applications},
\newblock Kluwer Academic Publishers, 1999.
\newblock \href{https://doi.org/10.1007/978-94-015-9333-5}{doi:10.1007/978-94-015-9333-5}.

\bibitem{gotti2023subatomicity}
F.~Gotti and H.~Polo,
\newblock ``On the subatomicity of polynomial semidomains,''
\newblock in {\em Algebraic, Number Theoretic, and Topological Aspects of Ring Theory}, pp. 197--212, Springer, 2023.

\bibitem{HebischWeinert1998}
U.~Hebisch and H.~J.~Weinert,
\newblock \emph{Semirings: Algebraic Theory and Applications in Computer Science},
\newblock World Scientific, 1998.
\newblock \href{https://doi.org/10.1142/3903}{doi:10.1142/3903}.

\bibitem{jiang2023primality}
N.~Jiang, B.~Li, and S.~Zhu,
\newblock ``On the primality and elasticity of algebraic valuations of cyclic free semirings,''
\newblock {\em International Journal of Algebra and Computation}, vol. 33, no. 2, pp. 197--210, 2023, World Scientific. 

\bibitem{Lind1984}
D.~A.~Lind,
\newblock \emph{The entropies of topological Markov shifts and a related class of algebraic integers},
\newblock Ergodic Theory Dynam. Systems \textbf{4} (1984), no.~2, 283--300.
\newblock \href{https://doi.org/10.1017/S0143385700002443}{doi:10.1017/S0143385700002443}.

\bibitem{RosalesGarciaSanchez2009}
J.~C.~Rosales and P.~A.~Garc\'ia-S\'anchez,
\newblock \emph{Numerical Semigroups},
\newblock Developments in Mathematics, vol.~20, Springer, 2009.
\newblock \href{https://doi.org/10.1007/978-1-4419-0160-6}{doi:10.1007/978-1-4419-0160-6}.

\bibitem{Vandiver1934}
H.~S.~Vandiver,
\newblock \emph{Note on a simple type of algebra in which the cancellation law of addition does not hold},
\newblock Bull. Amer. Math. Soc. \textbf{40} (1934), no.~12, 914--920.
\newblock \href{https://doi.org/10.1090/S0002-9904-1934-06003-8}{doi:10.1090/S0002-9904-1934-06003-8}.

\bibitem{Ferguson1997}
R. Ferguson,
``Irreducible polynomials with many roots of equal modulus,''
\textit{Acta Arithmetica}, \textbf{78} (1997), no.~3, 221--225.
\bibitem{ChenGottiLuYao2026}
T. Chen, F. Gotti, T. Lu, and A. Yao,
\textit{On the Additive Structure of Simple Semiring Extensions},
PRIMES Research Paper, Massachusetts Institute of Technology, 2026.

\end{thebibliography}
\end{document}